\newtheorem{theorem}{Theorem}[section]
\newtheorem{lemma}[theorem]{Lemma}
\theoremstyle{definition}
\newtheorem{definition}[theorem]{Definition}
\theoremstyle{remark}
\newtheorem{remark}[theorem]{Remark}
\numberwithin{equation}{section}
\begin{document}

\title{ 
Viscosity solutions to uniformly elliptic complex equations
}

\author{Wei Sun}

\address{Institute of Mathematical Sciences, ShanghaiTech University, Shanghai, China}
\email{sunwei@shanghaitech.edu.cn}




\begin{abstract}

In this paper, we shall extend the definition of $\mathcal{C}$-subsolution condition and adapt the argument of Guo-Phong-Tong~\cite{GPT2021} to replace Alexandroff-Bakelman-Pucci estimate in complex cases. 
As an application, we shall define and study the viscosity solutions to uniformly elliptic complex  equations and prove the H\"older regularity, following the argument for real equations. 
Our results show that the new method can improve the dependence in regularity and a priori estimates for complex elliptic equations.

\end{abstract}

\maketitle

\medskip
\section{Introduction}

In this paper, we shall study the viscosity solutions to complex elliptic equation
\begin{equation}
\label{equation-1-1}
	F(\sqrt{-1}\partial\bar\partial u (\bm{z}) , \bm{z}) = f(\bm{z})
\end{equation}
where $\bm{z} \in \Omega$, $u$ and $f$ are defined in a bounded domain $\Omega \subset \mathbb{C}^n$.
The real-valued function $F (M, \bm{z})$ is defined on $\mathcal{H} \times \Omega$, where $\mathcal{H}$ is the space of $n\times n$ Hermitian matrices. 
Moreover, $F$ is assumed to be uniformly elliptic for Hermitian matrices as defined below. 
\begin{definition}[Uniform ellipticity]

$F$ is uniformly elliptic if there are constants $\Lambda \geq \lambda > 0$ such that for any $M \in \mathcal{H}$ and $\bm{z} \in \Omega$
\begin{equation*}
	\lambda \Vert N \Vert \leq F (M + N, \bm{z}) - F(M,\bm{z}) \leq \Lambda \Vert N \Vert , \qquad \forall N \geq 0 , 
\end{equation*}
where
\begin{equation*}
	\Vert N \Vert := \sup_{|\bm{v}|  = 1} |N \bm{v}|   = \sup_{\bm{v} \neq \bm{0}} \frac{\sqrt{\langle N \bm{v} , N \bm{v}\rangle}}{\sqrt{\langle\bm{v},\bm{v}\rangle}} 
\end{equation*}
which is the maximum value of eigenvalues of $N$.

\end{definition}

For convenience, the definition for viscosity solutions to complex equations can be slightly rephrased as follows. 
\begin{definition}[Viscosity solution] 
   
We define the solution to Equation~\eqref{equation-1-1} in the viscosity sense as follows:
\begin{enumerate}
\item $u \in C(\Omega)$ is a {\em viscosity subsolution} to Equation~\eqref{equation-1-1} if
\begin{equation*}
	F(\sqrt{-1} \partial\bar\partial \varphi (\bm{z_0}), \bm{z_0}) \geq f(\bm{z_0}) 
\end{equation*}
for any $\bm{z}_0 \in \Omega$ and $ \varphi \in C^2 (\Omega)$ such that $u - \varphi$ has a local maximum at $\bm{z_0}$; 

\item $u \in C(\Omega)$ is a {\em viscosity supsolution} to Equation~\eqref{equation-1-1} if
\begin{equation*}
	F(\sqrt{-1} \partial\bar\partial \varphi (\bm{z_0}), \bm{z_0}) \leq f(\bm{z_0}) 
\end{equation*}
for any $ \bm{z_0} \in \Omega$ and $ \varphi \in C^2 (\Omega)$ such that $u - \varphi$ has a local minimum at $\bm{z_0}$;

\item $u \in C(\Omega)$ is a {\em viscosity solution} to Equation~\eqref{equation-1-1} if
$u$ is both a viscosity subsolution and a viscosity supersolution. 
\end{enumerate}

\end{definition}
From the definitions, some interesting results can be derived, which are almost the same as those in real cases. 
For more details, the author would like to refer the readers to the books by Caffarelli-Cabr\'e~\cite{CaffarelliCabre} and Han-Lin~\cite{HanLin1}, which most of our arguments here follow.

In this paper, we can obtain H\"older regularity for viscosity solutions.
\begin{theorem}
\label{theorem-1-3}
Suppose that $p > n$ and $u$ is a viscosity solution to Equation~\eqref{equation-1-1} in  $B_1 (\bm{0})$ for some function $f \in C (B_1 (\bm{0}))$. Then there is a constant $\alpha \in (0,1)$ depending on $n$, $p$, $\lambda$ and $\Lambda$ such that
\begin{equation}
\label{inequality-1-6}
	|u (\bm{z}) - u (\bm{w})| \leq C (n,p,\lambda,\Lambda) |\bm{z} - \bm{w}|^\alpha \left( \Vert u \Vert_{L^2 (B_1 (\bm{0}))} + \Vert f \Vert_{L^p (B_1 (\bm{0}))}\right)
\end{equation}
for $\bm{z}, \bm{w} \in B_{\frac{1}{2}} (\bm{0})$.

\end{theorem}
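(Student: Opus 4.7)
The plan is to follow the classical Caffarelli-Cabr\'e/Krylov-Safonov scheme for H\"older regularity of viscosity solutions to uniformly elliptic equations, with the crucial ABP step replaced by the complex-analytic substitute afforded by the extended $\mathcal{C}$-subsolution machinery adapted from Guo-Phong-Tong. After rescaling, we may assume that $\Vert u \Vert_{L^2(B_1(\bm{0}))} + \Vert f \Vert_{L^p(B_1(\bm{0}))} \leq 1$; by uniform ellipticity, $u$ is a viscosity subsolution of the complex Pucci maximal equation $\mathcal{P}^+_{\lambda,\Lambda}(\sqrt{-1}\partial\bar\partial u) \geq f - F(0,\bm{z})$ and a viscosity supersolution of the minimal one. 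Thus it suffices to derive a weak Harnack inequality for nonnegative supersolutions of the complex Pucci minimal operator.

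The heart of the argument is a measure-density lemma: if $v \geq 0$ satisfies $\mathcal{P}^-_{\lambda,\Lambda}(\sqrt{-1}\partial\bar\partial v) \leq g$ in $B_1(\bm{0})$ in the viscosity sense with $\inf_{B_{1/4}(\bm{0})} v \leq 1$, then there exist $M>0$ and $\mu_0 > 0$, depending only on $n,p,\lambda,\Lambda$, such that
\begin{equation*}
	|\{v \leq M\} \cap B_1(\bm{0})| \geq \mu_0 - C\Vert g \Vert_{L^p(B_1(\bm{0}))}^{\sigma}.
\end{equation*}
In the real case this is proved by applying ABP to the convex envelope of $-v$ and using the Jacobian identity $\det D^2\Gamma_{-v}$; here, because we only control the complex Hessian, I would instead construct an auxiliary $\mathcal{C}$-subsolution $\psi$ adapted to a standard barrier that vanishes on a fixed subball, and then invoke the complex substitute of Guo-Phong-Tong, as extended in the present paper, to estimate the complex Monge-Amp\`ere mass concentrated on the contact set $\{v = \psi + c\}$. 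This substitute plays the role of the ABP Jacobian identity and produces the measure-density bound with the required quantitative dependence.

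Having the measure-density lemma, I would run the standard Calder\'on-Zygmund cube decomposition, localized in the complex ball, to iterate the estimate across dyadic scales. This gives the weak Harnack inequality
\begin{equation*}
	\left( \int_{B_{1/2}(\bm{0})} v^{p_0} \right)^{1/p_0} \leq C \left( \inf_{B_{1/4}(\bm{0})} v + \Vert g \Vert_{L^p(B_1(\bm{0}))} \right)
\end{equation*}
for some small $p_0 > 0$ depending on $n,p,\lambda,\Lambda$. Applying it to $\sup_{B_r} u - u$ and $u - \inf_{B_r} u$ on balls $B_r(\bm{z})\subset B_1(\bm{0})$ and combining them yields the oscillation decay
\begin{equation*}
	\mathop{\mathrm{osc}}_{B_{r/2}(\bm{z})} u \leq \gamma \mathop{\mathrm{osc}}_{B_r(\bm{z})} u + C r^{2 - 2n/p} \Vert f \Vert_{L^p(B_1(\bm{0}))}
\end{equation*}
with some $\gamma \in (0,1)$; the exponent $2 - 2n/p$ is positive because $p > n$. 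A standard iteration lemma on geometric decay then converts this into \eqref{inequality-1-6} for some $\alpha = \alpha(n,p,\lambda,\Lambda) \in (0,1)$.

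The main obstacle is of course the measure-density lemma. The classical proof leans on ABP applied to a real convex envelope, a tool that is not available because the complex Hessian only controls a proper subcone of the real symmetric Hessians; consequently the concavity envelope can fail to be $\sqrt{-1}\partial\bar\partial$-positive and the Jacobian identity breaks down. The whole point of the $\mathcal{C}$-subsolution framework adapted here from Guo-Phong-Tong is to circumvent this: it replaces the Euclidean Jacobian bookkeeping by a comparison with a judiciously chosen plurisubharmonic barrier and an $L^\infty$ estimate in terms of a complex Monge-Amp\`ere mass. Once this ingredient is accepted, the remainder of the proof is a direct translation of the real Krylov-Safonov machinery to $\mathbb{C}^n$ and does not introduce new difficulties.
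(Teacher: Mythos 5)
There is a genuine gap at the final step: your argument, as structured, can only yield an estimate of the form $\mathrm{osc}_{B_r} u \leq C r^\alpha \left( \mathrm{osc}_{B_1} u + \Vert f \Vert_{L^p} \right)$, whereas the theorem claims the right-hand side $\Vert u \Vert_{L^2(B_1)} + \Vert f \Vert_{L^p(B_1)}$ — this $L^2$ dependence is precisely the improvement the paper is advertising. Normalizing $\Vert u \Vert_{L^2} + \Vert f \Vert_{L^p} \leq 1$ at the outset does not bridge this: an $L^2$ bound on $u$ gives no pointwise or oscillation control without an additional ingredient, namely a local maximum principle for subsolutions, $\sup_{B_{1/2}} u \leq C \left( \Vert u^+ \Vert_{L^q(B_{3/4})} + \Vert f \Vert_{L^p(B_1)} \right)$ (the paper's Theorem~\ref{theorem-4-13}, used with $q=2$ and applied to $\pm u$, then substituted into the oscillation estimate \eqref{inequality-4-117} after rescaling and covering). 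Your proposal never produces or invokes such a subsolution estimate — the weak Harnack inequality for supersolutions alone does not give it — so the chain of estimates terminates with $\mathrm{osc}_{B_1}u$ rather than $\Vert u\Vert_{L^2}$, and the stated inequality \eqref{inequality-1-6} is not reached.

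A secondary point: your description of the ABP replacement via "complex Monge-Amp\`ere mass concentrated on the contact set $\{v = \psi + c\}$" is not how the substitute actually works, and I doubt it can be made to work in that form, since there is no convex-envelope/Jacobian identity to localize mass on a contact set for $\sqrt{-1}\partial\bar\partial$-subharmonic comparisons. The paper's Theorem~\ref{theorem-1-4} (and Theorem~\ref{theorem-3-2}) instead solves an auxiliary Dirichlet problem for the complex Monge-Amp\`ere equation whose right-hand side is weighted by $(-u-s)^+ (f^+)^n$, compares $u$ with a power of the auxiliary potential through a global maximum-principle function $\Phi$, and then combines the Kolodziej/Wang--Wang--Zhou exponential estimate with a De Giorgi iteration in $s$; no contact set appears. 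In your write-up you should simply invoke that theorem (applied, as in the paper, to $w = u + g$ with the explicit barrier $g$, noting $w \in \overline{\mathcal{S}}(\lambda,\Lambda, f + \mathcal{M}^+(\sqrt{-1}\partial\bar\partial g))$) to get the measure-density lemma; the rest of your Calder\'on--Zygmund/weak-Harnack/oscillation-decay outline matches the paper's Section~\ref{Harnack} in substance, but it must be supplemented by the local maximum principle step described above to obtain the $L^2$ form of the estimate.
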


The H\"older regularity for viscosity solutions  can be achieved via Alexandroff-Bakelman-Pucci estimate. However, the regularity will be formulated by
\begin{equation}
	|u (\bm{z}) - u (\bm{w})| \leq C |\bm{z} - \bm{w}|^\alpha \left( osc_{B_1 (\bm{0})} u + \Vert f \Vert_{L^{2n} (B_1 (\bm{0}))}\right) .
\end{equation}
We observe that the H\"older regularity formula~\ref{inequality-1-6} is close to that for generalized solutions to linear elliptic equations in divergence form. 
Indeed, we can also have a modulus of continuity when $f$ is less regular. 
In order to obtain the new H\"older regularity~\eqref{inequality-1-6}, we need an analogue to Alexandroff-Bakelman-Pucci estimate.

\begin{theorem}
\label{theorem-1-4}
Let $f$ be a continuous and bounded function in $ B_1(\bm{0})$.  
Suppose that $u \in C (\bar B_1 (\bm{0}))$ satisfies that $u \in \overline{\mathcal{S}} (\lambda,\Lambda,f)$ in $B_1(\bm{0})$ and $u \geq 0$ on $\partial B_1(\bm{0})$. Then 
\begin{equation} 
	\sup_{B_1(\bm{0})} (- u )^+ \leq \frac{C}{\lambda} 	\left( \int_{\left\{ u (\bm{z}) < 0\right\}}    (f^+)^p  \beta^n	\right)^{\frac{1}{p}} , 
\end{equation}
where $p > n$ and $\beta$ is a Hermitian structure on $\mathbb{C}^n$, i.e.,
\begin{equation*}
	\beta = \sqrt{-1} \sum_i  d z^i \wedge d \bar z^i .
\end{equation*}

\end{theorem}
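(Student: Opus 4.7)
The approach is to adapt the strategy of Guo-Phong-Tong~\cite{GPT2021}, replacing the classical Alexandroff-Bakelman-Pucci argument---which in the real case relies on the convex envelope of $-u^+$ and the area formula $\int \det D^2 \Gamma$---with a direct use of an auxiliary complex Monge-Amp\`ere equation. This detour is necessary because plurisubharmonic functions are not convex in general; the complex Monge-Amp\`ere operator $(\sqrt{-1}\partial\bar\partial \cdot)^n$ plays the role of the Hessian determinant, supplying Bedford-Taylor-Kolodziej $L^\infty$ bounds in place of the Legendre-type area formula.

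We may discard the trivial case and assume $M := \sup_{B_1(\bm{0})}(-u)^+ > 0$. Since $u \geq 0$ on $\partial B_1(\bm{0})$, the set $E := \{u < 0\} \cap B_1(\bm{0})$ is compactly contained in $B_1(\bm{0})$. We then solve the Dirichlet problem
\begin{equation*}
(\sqrt{-1}\partial\bar\partial \psi)^n = A \, (f^+)^n \chi_E \, \beta^n \text{ in } B_1(\bm{0}), \qquad \psi = 0 \text{ on } \partial B_1(\bm{0}),
\end{equation*}
for a constant $A = A(n, \lambda)$ to be chosen. Because $p > n$, the right-hand side belongs to $L^{p/n}(B_1(\bm{0}))$ with $p/n > 1$; by the Bedford-Taylor-Kolodziej theory there is a unique bounded plurisubharmonic solution $\psi \leq 0$, and Kolodziej's stability estimate yields
\begin{equation*}
\Vert \psi \Vert_{L^\infty (B_1(\bm{0}))} \leq C (n, p) \, A^{1/n} \, \Vert f^+ \Vert_{L^p (E)} .
\end{equation*}

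The core step is the comparison: for appropriate $A$, the pointwise inequality $u \geq \psi$ should hold throughout $B_1(\bm{0})$. Otherwise $u - \psi$ attains a negative interior minimum at some $\bm{z}_0 \in E$ since $u \geq 0 \geq \psi$ on $\partial B_1(\bm{0})$. One then uses $\psi$ (suitably regularized along the lines of~\cite{GPT2021}) as a test function for the viscosity supersolution condition encoded in $u \in \overline{\mathcal{S}}(\lambda, \Lambda, f)$, together with the arithmetic-geometric mean inequality
\begin{equation*}
\bigl( \det (\sqrt{-1}\partial\bar\partial \varphi) \bigr)^{1/n} \leq \frac{1}{n\lambda} \, \mathcal{M}^-_{\lambda,\Lambda} (\sqrt{-1}\partial\bar\partial \varphi) \text{ when } \sqrt{-1}\partial\bar\partial \varphi \geq 0 ,
\end{equation*}
to turn the Monge-Amp\`ere equation for $\psi$ at $\bm{z}_0$ into a strict Pucci inequality contradicting the supersolution property of $u$, provided $A$ is taken large enough in terms of $n$ and $\lambda$. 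Evaluating the resulting comparison at a point where $(-u)^+$ is close to $M$ then gives $M \leq \Vert \psi \Vert_{L^\infty}$, which combined with the Kolodziej estimate produces the claimed bound, with the $1/\lambda$ factor emerging from the chosen $A$.

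The main obstacle is the regularity issue in the comparison step: $\psi$ is only bounded plurisubharmonic, so $\sqrt{-1}\partial\bar\partial \psi$ exists merely as a positive current and cannot be plugged directly into the viscosity definition for $u$. The Guo-Phong-Tong method circumvents this by approximating $\psi$ through its upper envelope with smoother plurisubharmonic barriers, or equivalently by reformulating the pointwise comparison as an integral identity against the Bedford-Taylor measure $(\sqrt{-1}\partial\bar\partial \psi)^n$. Careful bookkeeping of the dependence on the ellipticity constants throughout this approximation is what produces the $1/\lambda$ factor asserted in the final estimate.
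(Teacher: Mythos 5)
Your route is genuinely different from the paper's, and as written it has real gaps concentrated exactly where the work is. First, the claim that $E=\{u<0\}$ is compactly contained in $B_1(\bm{0})$ is false (take $u=|\bm{z}|^2-1$); this is harmless by itself, but it hides a bookkeeping issue: any smooth density you can compare against must dominate $(f^+)^n\chi_E$ pointwise on $E$, and such majorants converge at best to the integral over $\bar E$ (or a neighborhood), so to end with $\int_{\{u<0\}}(f^+)^p$ you need a level-set shift (run the argument for $u+s$, i.e. on $\{u<-s\}$, and let $s\to 0^+$), which is precisely the role of the parameter $s$ in the paper. Second, the "strict Pucci inequality" contradiction fails at any contact point $\bm{z_0}$ with $f^+(\bm{z_0})=0$: there the Monge--Amp\`ere density vanishes, so you only get $\mathcal{M}^-\geq 0$ while the supersolution condition allows $f(\bm{z_0})=0$, and no choice of $A$ restores strictness; you must perturb the density by $\epsilon>0$ (the paper's $\mathcal{F}_k>(f^+)^n+\epsilon$) and send $\epsilon\to 0$ at the end. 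Third, and most seriously, the comparison step is not actually carried out: $\psi$ is only a bounded (continuous) plurisubharmonic pluripotential solution, hence not an admissible $C^2$ test function, and the regularizations you propose do not repair this --- mollification or envelopes of $\psi$ destroy the pointwise lower bound $(\sqrt{-1}\partial\bar\partial\,\cdot)^n\geq A(f^+)^n\beta^n$ at the contact point, which is exactly what the contradiction needs, and "an integral identity against the Bedford--Taylor measure" is not a viscosity argument. The workable fix is to regularize the equation rather than the solution: take smooth densities dominating $((f^+)^n+\epsilon)\chi_E$, get smooth solutions from Caffarelli--Kohn--Nirenberg--Spruck, compare $u$ with those, and pass to the limit using Kolodziej stability. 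With these repairs your scheme does yield the stated bound with $A\sim(n\lambda)^{-n}$ producing the $1/\lambda$ factor.

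It is worth noting how this differs from the paper's proof, which never establishes $u\geq\psi$. There the auxiliary equation \eqref{equation-3-3} carries the weight $\tau_k(-u-s)$ (normalized by $A_{s,k}$) and a smoothed density $\mathcal{F}_k>(f^+)^n+\epsilon$, so the solutions $\psi_{s,k}$ are smooth and the Guo--Phong--Tong test function $-\,c\,A_{s,k}^{1/(n+1)}(-\psi_{s,k})^{n/(n+1)}$ can be used directly in the viscosity inequality; the maximum principle gives $\Phi\leq 0$, and the conclusion follows from the exponential integrability \eqref{inequality-3-20} (Kolodziej/Wang--Wang--Zhou) together with H\"older--Young and De Giorgi iteration. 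Your approach shortcuts this by importing Kolodziej's full $L^\infty$ stability estimate for the Dirichlet problem as a black box, which is legitimate but is a stronger input than the $\alpha$-invariant-type estimate the paper uses, and it is less aligned with the paper's aim of an argument that transplants to the $\mathcal{C}$-subsolution framework.
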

For simplicity and convenience in calculation, we write $\sqrt{-1}$ instead of $\frac{\sqrt{-1}}{2}$, which has no influence on our results. 
Indeed, Theorem~\ref{theorem-1-4} can be extended to arbitrary bounded domain $\Omega$ by a comparison principle argument as in Wang-Wang-Zhou~\cite{WangWangZhou2020}\cite{WangWangZhou2021}.

%

Theorem~\ref{theorem-1-4} is a consequence of discovering $\mathcal{C}$-subsolution and then adapting the argument of Guo-Phong-Tong\cite{GPT2021}. 
The $\mathcal{C}$-subsolution condition can be stated as follows.
\begin{definition}[$\mathcal{C}$-subsolution condition]
\label{definition-1-5}

Let $(M, \omega)$ be a Hermitian manifold  of complex dimension $n \geq 2$. For a complex elliptic equation
\begin{equation}
\label{equation-1-10}
	F (\chi + \sqrt{-1} \partial\bar\partial u) = f,
\end{equation}
where $\chi$ is a real $(1,1)$-form.

We say that the real $(1,1)$-form $\tilde \chi$ satisfies the $\mathcal{C}$-subsolution condition with respect to $\mathcal{F}$ if 
$\mathcal{F}$ is a real-valued function on $M$ such that at any point $\bm{z} \in M$ with $(1,1)$-form $\hat\chi (\bm{z}) \geq 0$ satisfying
\begin{equation*}
F \left(\tilde \chi (\bm{z}) + \hat\chi(\bm{z})\right) \leq f (\bm{z}),
\end{equation*}
we must have
\begin{equation}
\label{inequality-1-12}
	\hat\chi^n (\bm{z}) \leq e^{\mathcal{F}(\bm{z})} \omega^n (\bm{z}) .
\end{equation}

\end{definition}
\begin{remark}
In the study of Equation~\ref{equation-1-1}, we know that $\chi = \tilde \chi \equiv 0$ and $\omega = \beta$.
\end{remark}

The original $\mathcal{C}$-subsolution was introduced by Sz\'ekelyhidi~\cite{Szekelyhidi2018} on the basis of the cone condition~\cite{SongWeinkove2008}\cite{FangLaiMa2011}\cite{Sun201701} for complex Monge-Amp\`ere type equations on compact K\"ahler manifolds without boundary, that is, 
\begin{equation*}
	n \chi^{n - 1} > (n - k) c \chi^{n - k - 1} \wedge \omega^k \qquad \text{with } c = \frac{\int_M \chi^n}{\int_M \chi^{n - k} \wedge \omega^k}.
\end{equation*} 
The original $\mathcal{C}$-subsolution condition states that
there are constants $\delta > 0$ and $C > 0$ such that at any point $\bm{z} \in M$ with $(1,1)$-form $\hat\chi (\bm{z}) \geq 0$ satisfying
\begin{equation*}
F \left(\chi (\bm{z}) - \delta \omega(\bm{z}) + \hat\chi(\bm{z})\right) \leq f (\bm{z}),
\end{equation*}
we must have
\begin{equation*}
	\hat\chi (\bm{z}) \leq C \omega (\bm{z}) .
\end{equation*}
It is easy to see that
$\chi - \delta\omega$ satisfies the $\mathcal{C}$-subsolution with respect to some constant function in this case. 
A similar notion was introduced by Guan~\cite{Guan2014} earlier for real problems, and Sz\'ekelyhidi~\cite{Szekelyhidi2018} discussed the relationship between the two notions. 
Assuming the original $\mathcal{C}$-subsolution, Sz\'ekelyhidi adopted the inequality as in Blocki~\cite{Blocki2005}
\begin{equation*}
	\det (D^2 v) \leq 2^{2n} \det (v_{i\bar j})^2
\end{equation*}
and hence applied a variant of the Alexandroff-Bakelman-Pucci maximum principle to obtain the $L^\infty$ estimate. 
However, this method cannot deal with some degenerate or singular cases, e.g. the boundary case
\begin{equation*}
	n \chi^{n - 1} \geq (n - k) c \chi^{n - k - 1} \wedge \omega^k \qquad \text{with } c = \frac{\int_M \chi^n}{\int_M \chi^{n - k} \wedge \omega^k}.
\end{equation*}
For Donaldson equation on K\"ahler surface
\begin{equation*}
(\chi + \sqrt{-1} \partial\bar\partial u)^2 = c (\chi + \sqrt{-1} \partial\bar\partial u) \wedge \omega,
\end{equation*}
Song and Weinkove~\cite{SongWeinkove2008} showed that the quantity $|u| + |\Delta_\omega u|$ may blow up, and later Fang, Lai, Song and Weinkove~\cite{FangLaiSongWeinkove2014} studied the $J$-flow in the boundary case. 
The trick is to rewrite Donaldson equation as a complex Monge-Amp\`ere equation as observed by Chen~\cite{Chen2000}:
\begin{equation}
\label{equation-1-19}
\left(\chi - \frac{c}{2} \omega + \sqrt{-1} \partial\bar\partial u\right)^2 = \frac{c^2}{4} \omega^2 .
\end{equation}
B. Weinkove introduced the boundary case to the author after the author solved the $L^\infty$ estimate via Moser iteration~\cite{Sun201701}. Unfortunately, neither the transformation~\eqref{equation-1-19} nor the method in \cite{Sun201701} works well for the boundary case in higher dimensions. 
In the research of the boundary case, we realized that it is possible and also better to impose some geometric assumption~\eqref{inequality-1-12}, in the view point of PDE. 
For simplicity, we continue to use the name $\mathcal{C}$-subsolution for Definition~\ref{definition-1-5}, where $\mathcal{C}$ means {\em cone} probably. 

Since condition~\eqref{inequality-1-12} is a complex Monge-Amp\`ere operator, we need some more assumptions in addition to the $\mathcal{C}$-subsolution condition, 
on compact K\"ahler manifolds without boundary. 
We assume, at least, that $[\chi - \tilde \chi]$  is nef and big, that is,
\begin{equation}
\label{inequality-1-20}
	[\chi - \tilde \chi + t\omega] \text{ is K\"ahler for any } t > 0, 
	\qquad \text{and }\qquad 
	\int_M (\chi - \tilde \chi)^n > 0
	.
\end{equation} 
Assumption~\eqref{inequality-1-20} covers the following particular cases: 
\begin{enumerate}[(i)]
\item $[\chi - \tilde \chi]$ is semipositive and big; 
\item $[\chi - \tilde \chi]$ is K\"ahler. 
\end{enumerate}
With these assumptions, Sui and the author~\cite{SuiSun2023} discovered the $L^\infty$ estimate for complex Hessian quotient equations by adapting a technique of Guo-Phong-Tong~\cite{GPT2021}, which is to be discussed later. 
Indeed, we found that it is a natural and better replacement for Alexandroff-Bakelman-Pucci estimate to combine $\mathcal{C}$-subsolution condition and Guo-Phong-Tong's argument in complex problems, which is to be discussed later. 
In the study of complex elliptic equations~\cite{Sun202210}\cite{Sun202211}, it is a key to find out some $\mathcal{C}$-subsolution conditions.
In this paper, we plan to study the H\"older regularity of the viscosity solutions to uniformly elliptic complex equations on $\mathbb{C}^n$ in this way, which is able to show a better dependence than Alexandroff-Bakelman-Pucci estimate.
However, it is usually an open problem to find $\tilde \chi$ satisfying \eqref{inequality-1-20} on complex manifolds. In views of \cite{DemaillyPaun2004}\cite{Chen2021}, we plan to discover some numerical conditions to find appropriate $\tilde \chi$ for complex Hessian quotient equations in future.

To utilize assumption~\eqref{inequality-1-12}, we shall investigate  a complex Monge-Amp\`ere equation.  
In particular,  the techniques in $L^\infty$ estimate of complex Monge-Amp\`ere equations are heuristic and useful for other complex elliptic equations.
It is well known that   pluripotential theory is a powerful technique to study  complex Monge-Amp\`ere equation, 
since the pioneering works of Bedford and Taylor~\cite{BedfordTaylor1976}\cite{BedfordTaylor1982}. 
%
%
%
%
%
%
Besides pluripotential theory, there are also some PDE proofs for $L^\infty$ estimate. Yau~\cite{Yau1978} applied Moser iteration when the equation is smooth, and Blocki~\cite{Blocki2005} adopted Aleksandrov-Bakelman-Pucci estimate with the right-hand side of the equation in $L^q$ $(q > 2)$.
For Dirichlet problems on $\mathbb{C}^n$, Wang-Wang-Zhou~\cite{WangWangZhou2021} used Sobolev type inequality~\cite{WangWangZhou2020} to give a proof. 
Later, Guo-Phong-Tong~\cite{GPT2021} combined the methods of Wang-Wang-Zhou~\cite{WangWangZhou2021} and Chen-Cheng~\cite{ChenCheng2021}.
In fact, B. Guan reminded the author of \cite{WangWangZhou2021}. But it seems to the author that their method and related functionals are not compatible with the $\mathcal{C}$-subsolution condition~\eqref{inequality-1-12} on complex manifolds. 
In this paper, the author would like to adapt the technique of Guo-Phong-Tong~\cite{GPT2021} to $\mathcal{C}$-subsolution condition. 
But, the author believes that pluripotential theory can also work well with $\mathcal{C}$-subsolution condition.

In \cite{GP2022}\cite{GP202207}\cite{GPT2021}\cite{GPT202106}\cite{GPTW20212}, the complex elliptic operator in consideration is defined by
\begin{equation}
\label{equation-1-21}
 F (\chi + \sqrt{-1}\partial\bar\partial u) 
 := 
 \mathfrak{f} (\bm{\lambda} (\chi + \sqrt{-1} \partial\bar\partial u)) 
 ,
\end{equation}
where $[\chi]$ is nef and big, and $\bm{\lambda} (\chi + \sqrt{-1} \partial\bar\partial u)$ is the eigenvalue vector of $\chi + \sqrt{-1} \partial\bar\partial u$ with respect to $\Omega$.
Real valued function $\mathfrak{f}$  is supposed to satisfy the following conditions:
\begin{enumerate}
\item $\mathfrak{f} (\bm{\lambda})$ is defined on $\Gamma \subset \mathbb{R}^n$, where $\Gamma$ is an open symmetric convex cone with vertex at the origin containing the positive cone $\Gamma^n := \{\bm{\lambda} \in \mathbb{R}^n | \lambda_1 > 0, \cdots, \lambda_n > 0\}$;

\item $\mathfrak{f} (\bm{\lambda})$ is invariant under permutations of the components of $\bm{\lambda}$;

\item for any $1 \leq i \leq n$ and $\bm{\lambda} \in \Gamma$,
\begin{equation*}
\frac{\partial \mathfrak{f}}{\partial \lambda_i} (\bm{\lambda}) > 0 ;
\end{equation*}

\item there is a constant $c > 0$ such that for any $\bm{\lambda} \in \Gamma$,
\begin{equation*}
	\prod^n_{i = 0} \frac{\partial \mathfrak{f}}{\partial \lambda_i} (\bm{\lambda}) \geq c ;
\end{equation*}

\item there is a constant $C > 0$ such that for any $\bm{\lambda} \in \Gamma$,
\begin{equation*}
	\sum^n_{i = 1} \frac{\partial \mathfrak{f}}{\partial \lambda_i} (\bm{\lambda}) \lambda_i \leq C \mathfrak{f} (\bm{\lambda}) .
\end{equation*}

\end{enumerate}
Complex Hessian equations, including complex Monge-Amp\`ere equation, satisfy these conditions, while complex Hessian quotient equations do not. 
As described in \cite{SuiSun2023}, these conditions imply that $\chi$ satisfies the $\mathcal{C}$-subsolution condition with respect to $\frac{1}{c} \left(\frac{C f}{n}\right)^n$. Our idea is to find out an appropriate $\mathcal{C}$-subsolution condition from elliptic operator $F$ and then adapt the argument of Guo-Phong-Tong to the Monge-Amp\`ere operator~\eqref{inequality-1-12} directly, as in~\cite{SuiSun2023}\cite{Sun202210}\cite{Sun202211}. 

\medskip
\section{Preliminary}

In this section, we shall state some notations and lemmas, possibly with some little modifications subject to complex problems. For details, we refer the readers to textbooks on elliptic partial differential equations of second order, 
e.g. Gilbarg-Trudinger~\cite{GilbargTrudinger}, Caffarelli-Cabr\'e~\cite{CaffarelliCabre}, Chen-Wu~\cite{ChenWu}  and  Han-Lin~\cite{HanLin1}.

\medskip
\subsection{Pucci's extremal operators}

Following Caffarelli-Cabr\'e~\cite{CaffarelliCabre}, we shall adapt the definitions on Pucci's extremal operators for the complex cases.

\begin{definition}[Pucci's extremal operators in complex case]

For $M \in \mathcal{H}$,
\begin{equation*}
\mathcal{M^-} (M, \lambda,\Lambda) 
:= \lambda \sum_{e_i > 0} e_i + \Lambda \sum_{e_i < 0} e_i ,
\end{equation*}
and
\begin{equation*}
\mathcal{M^+} (M,\lambda,\Lambda)
:= \Lambda \sum_{e_i > 0} e_i + \lambda \sum_{e_i > 0} e_i ,
\end{equation*}
where 
$\{e_i | i = 1,\cdots ,n\}$ is the eigenvalue set of $M \in \mathcal{H}$ and $0 < \lambda \leq \Lambda$.

\end{definition}

\begin{definition}
Suppose that real valued function $f \in C(\Omega)$ and $0 < \lambda \leq \Lambda$. We denote
\begin{equation*}
\underline{\mathcal{S}} (\lambda,\Lambda,f) := \left\{ u \in C (\Omega) | \mathcal{M}^+ (\sqrt{-1}\partial\bar\partial u,\lambda,\Lambda) \geq f(x) \text{ in the viscosity sense}\right\} ,
\end{equation*}
\begin{equation*}
\overline{\mathcal{S}} (\lambda,\Lambda,f) := \left\{ u \in C (\Omega) | \mathcal{M}^- (\sqrt{-1}\partial\bar\partial u,\lambda,\Lambda) \leq f(x) \text{ in the viscosity sense}\right\} ,
\end{equation*}
and hence
\begin{equation*}
{\mathcal{S}} (\lambda,\Lambda,f) := \underline{\mathcal{S}} (\lambda,\Lambda,f) \cap \overline{\mathcal{S}} (\lambda,\Lambda,f) .
\end{equation*}

\end{definition}

\medskip

\subsection{Some important lemmas}

We shall need some iteration methods in the arguments. De Giorgi iteration will be applied in $L^\infty$ estimate in Section~\ref{Analogue}.  Here we state a version from Chen-Wu~\cite{ChenWu}. 
\begin{lemma}[De Giorgi iteration]
\label{lemma-3-1}

Suppose that $\phi (s)$ is a nonnegaive increasing function on $[s_0,+\infty]$ such that
\begin{equation*}
	s' \phi (s' + s) \leq C_0 \phi^{1+\delta} (s), \qquad \forall s'>0, s \geq s_0,
\end{equation*}
where $\delta > 0$. Then $\phi (s_0 + d) = 0$ when $d \geq 2^{\frac{1 + \delta}{\delta}} C_0 \phi^{\delta} (s_0) $.

\end{lemma}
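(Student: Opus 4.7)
The plan is to run the classical De Giorgi dyadic iteration. First I would introduce a geometric sequence of levels $s_k = s_0 + d(1 - 2^{-k})$, $k = 0, 1, 2, \ldots$, which strictly increases to $s_0 + d$ with gaps $s_{k+1} - s_k = d\,2^{-(k+1)}$. The goal is to force $\phi(s_k) \to 0$; once this is established, monotonicity of $\phi$ (reading the statement as nonincreasing, which is what the one-sided hypothesis requires in order to be nontrivial) immediately gives $\phi(s_0 + d) \le \lim_{k\to\infty} \phi(s_k) = 0$.

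Applying the hypothesis with $s = s_k$ and $s' = s_{k+1} - s_k$ yields the nonlinear recursion
\[
\phi(s_{k+1}) \le \frac{2^{k+1} C_0}{d}\,\phi(s_k)^{1+\delta}.
\]
The factor $2^{k+1}$ grows in $k$, but the super-linear exponent $1+\delta$ on $\phi(s_k)$ should beat this growth provided $\phi(s_k)$ itself decays geometrically at a sufficient rate.

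The natural ansatz is $\phi(s_k) \le r^{-k}\phi(s_0)$ for a ratio $r > 1$ to be chosen. Feeding this into the recursion reduces the inductive step to the pointwise inequality $2^{k+1} C_0 \phi(s_0)^\delta / d \le r^{k\delta - 1}$ for every $k \ge 0$. Equalizing the $k$-dependence on the two sides forces $r^\delta = 2$, i.e.\ $r = 2^{1/\delta}$; with this choice all the inequalities collapse to the single condition at $k = 0$, namely $d \ge 2r\,C_0\phi(s_0)^\delta = 2^{(1+\delta)/\delta} C_0 \phi(s_0)^\delta$, which is exactly the hypothesis. The induction then closes and gives $\phi(s_k) \le 2^{-k/\delta}\phi(s_0) \to 0$.

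The part I would keep the closest eye on is not the iteration itself, which is routine once the ansatz is chosen, but the calibration of the ratio: $r = 2^{1/\delta}$ is the unique value that makes the threshold for $d$ sharp and reproduces the stated constant $2^{(1+\delta)/\delta}$. Any other geometric rate produces a valid iteration but either a weaker constant, or a recursion whose $k$-dependence does not balance so that a single $k$ does not dominate. Matching $r$ to the exponent $\delta$ appearing in the hypothesis is therefore the one nontrivial piece of bookkeeping in the argument.
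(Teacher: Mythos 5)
Your proof is correct: the dyadic levels $s_k=s_0+d(1-2^{-k})$, the recursion $\phi(s_{k+1})\le \frac{2^{k+1}C_0}{d}\phi(s_k)^{1+\delta}$, and the calibrated ansatz $\phi(s_k)\le 2^{-k/\delta}\phi(s_0)$ reproduce exactly the stated constant $2^{\frac{1+\delta}{\delta}}C_0\phi^{\delta}(s_0)$, and your reading of ``increasing'' as nonincreasing is the right one (it is how the lemma is applied to $\phi(s)=\int_{\Omega_s}((f^+)^n+\epsilon)\beta^n$, and monotonicity is only needed in the final passage $\phi(s_0+d)\le\lim_k\phi(s_k)$). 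The paper does not prove this lemma but quotes it from Chen--Wu, and your argument is precisely the standard De Giorgi--Stampacchia iteration behind that citation, so there is nothing to add.
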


A standard way to derive H\"older regularity is to apply the following lemma to a certain Harnack inequality. To well calculate the exponents and coefficients later, we write down the lemma from Gilbarg-Trudinger~\cite{GilbargTrudinger}. 
\begin{lemma}
\label{lemma-2-7}
Suppose that $\omega$ and $\sigma$ are nondecreasing functions on an interval $(0,R]$. 
If there are positive constants $ \gamma, \tau < 1$ satisfying
\begin{equation}
\label{inequality-2-1}
\omega (\tau r) \leq \gamma \omega (r) + \sigma (r), \qquad \forall\, 0 < r \leq R, 
\end{equation}
then we have that for any $\mu \in (0,1)$ and $r \leq R$ 
\begin{equation*}
	\omega (r) \leq C (\gamma,\tau) \left\{ \left( \frac{r}{R} \right)^\alpha  \omega (R) + \sigma \left(r^{\mu} R^{1-\mu}\right)     \right\}
\end{equation*}
where $\alpha := (1 - \mu) \frac{\ln \gamma}{\ln \tau}$.

\end{lemma}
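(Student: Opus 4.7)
The plan is the classical two-scale iteration: I apply the one-step recursion \eqref{inequality-2-1} along the geometric sequence of radii $\tau^j R$, and then split the resulting telescoping sum at an intermediate scale chosen to balance the two desired error terms.

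First, for each $r\in(0,R]$ I would pick the nonnegative integer $k$ with $\tau^{k+1}R<r\leq\tau^{k}R$; since $\omega$ is nondecreasing, it suffices to estimate $\omega(\tau^k R)$. A straightforward induction on $k$ starting from \eqref{inequality-2-1} yields
\[
\omega(\tau^k R)\,\leq\,\gamma^{k}\omega(R)+\sum_{j=0}^{k-1}\gamma^{k-1-j}\sigma(\tau^{j}R).
\]

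Next I would split this sum at the index $m:=\lceil\mu\log(r/R)/\log\tau\rceil$, i.e.\ the smallest integer with $\tau^{m}R\leq r^{\mu}R^{1-\mu}$. On the coarse block $0\leq j<m$ I would use monotonicity $\sigma(\tau^{j}R)\leq\sigma(R)$ combined with the geometric-series bound $\sum_{j=0}^{m-1}\gamma^{k-1-j}\leq \gamma^{k-m}/(1-\gamma)$, and on the fine block $m\leq j\leq k-1$ I would use $\sigma(\tau^{j}R)\leq\sigma(\tau^{m}R)\leq\sigma(r^{\mu}R^{1-\mu})$ together with $\sum_{j=m}^{k-1}\gamma^{k-1-j}\leq 1/(1-\gamma)$. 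The split is engineered precisely so that $k-m\geq(1-\mu)k-O(1)$, giving $\gamma^{k-m}\leq C(\gamma)(r/R)^{\alpha}$ with $\alpha=(1-\mu)\log\gamma/\log\tau$; this follows by combining $\tau^{k+1}R<r$ with the elementary identity $\gamma^{(1-\mu)k}=(\tau^{k})^{(1-\mu)\log\gamma/\log\tau}$. Similarly $\gamma^{k}\leq\gamma^{-1}(r/R)^{\log\gamma/\log\tau}\leq\gamma^{-1}(r/R)^{\alpha}$ since $\mu<1$ and $r\leq R$. Substituting these bounds back into the iterated inequality produces the stated estimate.

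The main obstacle is bookkeeping rather than any deep idea: one has to choose the split index $m$ exactly so that the decay exponent in the $\omega$-term equals the claimed $\alpha=(1-\mu)\log\gamma/\log\tau$ and simultaneously the argument of $\sigma$ equals the geometric mean $r^{\mu}R^{1-\mu}$, and then collect all the $\gamma^{-O(1)}$ and $(1-\gamma)^{-1}$ factors (which depend only on $\gamma$ and $\tau$) into the single constant $C(\gamma,\tau)$. Once the arithmetic of the exponents is organized, every individual step reduces to a geometric-series estimate and the monotonicity hypotheses on $\omega$ and $\sigma$.
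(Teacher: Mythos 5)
Your iteration scheme has a genuine gap at the coarse block. Because you start the recursion at the top radius $R$ and descend through every scale, the $\sigma$-terms at the coarse scales $\tau^{j}R$, $0\le j<m$, are unavoidable, and your own estimate for them produces a contribution of size $\gamma^{k-m}\sigma(R)/(1-\gamma)\approx C(\gamma)\,(r/R)^{\alpha}\,\sigma(R)$. Thus what your argument actually yields is
\begin{equation*}
\omega(r)\;\le\; C(\gamma,\tau)\left\{\left(\tfrac{r}{R}\right)^{\alpha}\bigl(\omega(R)+\sigma(R)\bigr)+\sigma\!\left(r^{\mu}R^{1-\mu}\right)\right\},
\end{equation*}
which is strictly weaker than the stated conclusion: the extra term $(r/R)^{\alpha}\sigma(R)$ is controlled neither by $(r/R)^{\alpha}\omega(R)$ (the hypotheses impose no relation between $\sigma(R)$ and $\omega(R)$) nor by $\sigma(r^{\mu}R^{1-\mu})$ (take $\sigma$ vanishing on $(0,R/2]$ with $\sigma(R)>0$; then $\sigma(r^{\mu}R^{1-\mu})=0$ for small $r$ while $(r/R)^{\alpha}\sigma(R)>0$). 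So the final sentence ``substituting these bounds back produces the stated estimate'' does not follow; the remaining exponent bookkeeping ($\gamma^{k}\le\gamma^{-1}(r/R)^{\ln\gamma/\ln\tau}$, $\gamma^{k-m}\le C(\gamma)(r/R)^{\alpha}$, the fine-block estimate) is fine.

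The fix is exactly the point of the proof the paper is quoting (Gilbarg--Trudinger, Lemma 8.23, which the paper cites without reproving): do not iterate through the coarse scales at all. Fix the intermediate radius $R_{1}:=r^{\mu}R^{1-\mu}$, iterate the hypothesis only on scales $\tau^{j}R_{1}\le R_{1}$ to get $\omega(\tau^{m}R_{1})\le\gamma^{m}\omega(R_{1})+\sigma(R_{1})\sum_{i<m}\gamma^{i}$, bound $\omega(R_{1})\le\omega(R)$ purely by monotonicity of $\omega$ (so no $\sigma$-terms are picked up above $R_{1}$), and then choose $m$ with $\tau^{m}R_{1}<r\le\tau^{m-1}R_{1}$; this gives $\omega(r)\le\gamma^{-1}(r/R_{1})^{\ln\gamma/\ln\tau}\omega(R)+\sigma(R_{1})/(1-\gamma)$, and $(r/R_{1})^{\ln\gamma/\ln\tau}=(r/R)^{\alpha}$ by the choice of $R_{1}$. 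Incidentally, the weaker inequality you do prove would still suffice for the application in Section 4 (there $\sigma(R)$ is finite and harmless), but it is not the lemma as stated.
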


In Section~\ref{Harnack}, the main task is to obtain a Harnack inequality in the form of \eqref{inequality-2-1}. Moreover, we shall also need Calderon-Zygmund decomposition. Nevertheless, the procedure and the coefficients are the same as those for real equations, so we omit the statements.


\medskip
\section{Analogue to Alexandroff-Bakelman-Pucci estimate}
\label{Analogue}

In this section, we shall prove Theorem~\ref{theorem-1-4}, by adapting the argument of Guo-Phong-Tong~\cite{GPT2021}. 
In the arguments, $C$ denotes a constant, which might vary.

%
%
%
%
%
%
%
%
%
%
%
%
%

Define
\begin{equation*}
	\Omega_s := \{\bm{z} \in B_1(\bm{0})  | - u (\bm{z}) > s\} \subset B_1 (\bm{0}) .
\end{equation*}
We solve the Dirichlet problem
\begin{equation}
\label{equation-3-3}
\left\{
\begin{aligned}
	\left(\sqrt{-1} \partial\bar\partial \psi_{s,k}\right)^n &= \frac{\tau_k (- u - s)}{A_{s,k}} \mathcal{F}_k \beta^n , && \text{in } B_1(\bm{0}) \\
	\psi_{s,k} &= 0 , && \text{on } \partial B_1(\bm{0}) ,
\end{aligned}
\right.
\end{equation}
where
\begin{equation*}
	A_{s,k} := \int_{B_1 (\bm{0})}   \tau_k (- u - s)  \mathcal{F}_k \beta^n \to A_s := \int_{\Omega_s} (- u - s) \left((f^+)^n +\epsilon\right) \beta^n .
\end{equation*}
$\tau_k : \mathbb{R} \to \mathbb{R}^+$ is a decreasing sequence of smooth functions such that
\begin{equation*}
\tau_k (t) = 
\left\{
\begin{aligned}
&t + \frac{3}{k} , && \text{when } x \geq - \frac{1}{k}, \\
& \frac{1}{k} , && \text{when } x \leq - \frac{2}{k}
\end{aligned}
\right.
\end{equation*}
and otherwise
\begin{equation*}
	\frac{1}{k} \leq \tau_k (t) \leq \frac{2}{k} .
\end{equation*}
$\mathcal{F}_k$ is a uniformly decreasing sequence of smooth functions such that
\begin{equation*}
	0 < \mathcal{F}_k (\bm{z}) - (f^+)^n (\bm{z}) - \epsilon < \frac{1}{k} ,
\end{equation*}
where $\epsilon > 0$. 
By \cite{CaffarelliKohnNirenbergSpruck1985}, the solution $\psi_{s,k}$ is smooth.

Define a continuous function $\Phi$ as follows:
\begin{equation*}
	\Phi := - \left(\frac{n^2  \lambda}{n + 1}\right)^{- \frac{n}{n + 1}}   A_{s,k}^{\frac{1}{n + 1}} \left(- \psi_{s,k}  \right)^{\frac{n}{n + 1}} - u - s ,
\end{equation*}
where $s > 0$.
If $\Phi$ reaches its maximal value at $\bm{z_{max}} \in \bar B_1 (\bm{0}) \setminus \Omega_s$, 
\begin{equation*}
	\Phi (\bm{z_{max}}) \leq  - u - s \leq 0.
\end{equation*}
If $\Phi$ reaches its maximal value at $\bm{z_{max}} \in \Omega_s$, then 
$
u + \left(\frac{n^2 \lambda}{n + 1}\right)^{- \frac{n}{n + 1}} A^{\frac{1}{n + 1}}_{s,k} (- \psi_{s,k})^{\frac{n}{n + 1}}  + s
$
has a local minimum at $\bm{z_{max}}$. 
According to the definitions,
\begin{equation*}
\begin{aligned}
\mathcal{M}^- \left(- \left(\frac{n^2 \lambda}{n + 1}\right)^{-\frac{n}{n + 1}} A^{\frac{1}{n + 1}}_{s,k}  \sqrt{-1} \partial\bar\partial (- \psi_{s,k})^{\frac{n}{n + 1}},\lambda,\Lambda\right)	\leq f = f^+
\end{aligned}
\end{equation*}
at point $\bm{z_{max}}$. 
Direct calculation shows that
\begin{equation*}
\begin{aligned}
	&\quad - \left(\frac{n^2 \lambda}{n + 1}\right)^{-\frac{n}{n + 1}} A^{\frac{1}{n + 1}}_{s,k} \sqrt{-1} \partial\bar\partial (- \psi_{s,k})^{\frac{n}{n + 1}} \\
	&\geq \left(\frac{n^2 \lambda}{n + 1}\right)^{-\frac{n}{n + 1}} \frac{n}{n + 1}  A^{\frac{1}{n + 1}}_{s,k}  \left(-\psi_{s,k}\right)^{- \frac{1}{n + 1}} \sqrt{-1}\partial\bar\partial\psi_{s,k}     
	\geq 0 ,
\end{aligned}
\end{equation*}
and hence
\begin{equation}
\label{inequality-3-16}
\begin{aligned}
	f^+ 
	&\geq \mathcal{M}^- \left(- \left(\frac{n^2 \lambda}{n + 1}\right)^{-\frac{n}{n + 1}} A^{\frac{1}{n + 1}}_{s,k}  \sqrt{-1} \partial\bar\partial (- \psi_{s,k})^{\frac{n}{n + 1}},\lambda,\Lambda\right) \\
	&\geq \left(\frac{n^2 \lambda}{n + 1}\right)^{-\frac{n}{n + 1}} \frac{n}{n + 1}  A^{\frac{1}{n + 1}}_{s,k}  \left(-\psi_{s,k}\right)^{- \frac{1}{n + 1}} \lambda Tr \left( \sqrt{-1}\partial\bar\partial\psi_{s,k}  \right) \\
	&\geq 
	\left(\frac{n^2 \lambda}{n + 1}\right)^{\frac{1}{n + 1}}   A^{\frac{1}{n + 1}}_{s,k}  \left(-\psi_{s,k}\right)^{- \frac{1}{n + 1}} \left(\det \left( \sqrt{-1}\partial\bar\partial\psi_{s,k}  \right)\right)^{\frac{1}{n}} \\
	&\geq 
	\left(\frac{n^2 \lambda}{n + 1}\right)^{\frac{1}{n + 1}}   A^{- \frac{1}{n (n + 1)}}_{s,k}  \left(-\psi_{s,k}\right)^{- \frac{1}{n + 1}}   (- u - s)^{\frac{1}{n}}  f^+ 
	.
\end{aligned}
\end{equation}
Taking the $n$-th power of \eqref{inequality-3-16}, we have that at $\bm{z_{max}}$
\begin{equation*}
\begin{aligned}
	1
	&\geq 
	\left(\frac{n^2 \lambda}{n + 1}\right)^{\frac{n}{n + 1}}   A^{- \frac{1}{n + 1}}_{s,k}  \left(-\psi_{s,k}\right)^{- \frac{n}{n + 1}}   (- u - s)
	,
\end{aligned}
\end{equation*}
that is,
\begin{equation*}
\Phi = - \left(\frac{n^2  \lambda}{n + 1}\right)^{- \frac{n}{n + 1}}   A_{s,k}^{\frac{1}{n + 1}} \left(- \psi_{s,k}  \right)^{\frac{n}{n + 1}} - u - s 
\leq 0
.
\end{equation*}
In sum, $\Phi \leq 0$ in $\bar B_1 (\bm{0})$.

\begin{theorem}
\label{theorem-3-2}
Let $f$ be a continuous and bounded function in $ B_1(\bm{0})$.  
Suppose that $u \in C (B_1 (\bm{0}))$ satisfies that $u \in \overline{\mathcal{S}} (\lambda,\Lambda,f)$ in $B_1(\bm{0})$ and $u \geq 0$ on $\partial B_1(\bm{0})$. Then 
\begin{equation*} 
	\sup_{B_1(\bm{0})} (- u )^+ \leq \frac{C}{\lambda} 	\left( \int_{\left\{ u (\bm{z}) < 0\right\}}  \left( (f^+)^n \right)   \ln^p \left(1 +  (f^+)^n   \right)  \beta^n	+ 1	\right)^{\frac{1}{p}}  \left(\int_{\left\{ u (\bm{z}) < 0\right\}}  (f^+)^n   \beta^n\right)^{\frac{1}{n} - \frac{1}{p}}, 
\end{equation*}
where $p > n$.
\end{theorem}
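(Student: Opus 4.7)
The plan is as follows. The pointwise inequality $\Phi \leq 0$ on $\bar B_1(\bm{0})$ just established reads
\begin{equation*}
    -u(\bm{z})-s \;\leq\; \left(\frac{n^2\lambda}{n+1}\right)^{-\frac{n}{n+1}} A_{s,k}^{\frac{1}{n+1}} \left(-\psi_{s,k}(\bm{z})\right)^{\frac{n}{n+1}} ,
\end{equation*}
so, taking the supremum over $B_1(\bm{0})$,
\begin{equation*}
    \sup_{B_1(\bm{0})}(-u-s)^+ \;\leq\; C_{n,\lambda}\, A_{s,k}^{\frac{1}{n+1}}\, \|\psi_{s,k}\|_{L^\infty}^{\frac{n}{n+1}}.
\end{equation*}
The remaining task is to control $\|\psi_{s,k}\|_{L^\infty}$ sharply by integrals involving $(f^+)^n$.

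To do this, I would apply the Guo-Phong-Tong $L^\infty$ estimate, in its Dirichlet-boundary form on the ball, to the auxiliary complex Monge-Amp\`ere equation~\eqref{equation-3-3}. Denoting the right-hand-side density by $g_{s,k}:= \tau_k(-u-s)\mathcal{F}_k / A_{s,k}$, the GPT argument (combined with Young's inequality and a log-Orlicz test weight) yields
\begin{equation*}
    \|\psi_{s,k}\|_{L^\infty} \;\leq\; C_{n,p}\left(\int_{B_1(\bm{0})} g_{s,k}\, \ln^p (1+g_{s,k})\,\beta^n + 1\right)^{\frac{1}{p}} \|g_{s,k}\|_{L^1}^{\frac{1}{n}-\frac{1}{p}} \qquad (p>n).
\end{equation*}
The choice of $A_{s,k}$ forces $\|g_{s,k}\|_{L^1}=1$, so the last factor equals $1$.

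To conclude, I would expand $g_{s,k}$ and use the elementary inequality $\ln(1+ab)\leq \ln(1+a)+\ln(1+b)$ to separate the contributions of $\tau_k(-u-s)/A_{s,k}$ and $\mathcal{F}_k$. Passing to the limit $k\to\infty$ replaces these with $(-u-s)^+/A_s$ and $(f^+)^n+\epsilon$. Combining the two displayed inequalities and using the crude bound $A_s \leq \sup(-u-s)^+ \cdot \int_{\Omega_s}\bigl((f^+)^n+\epsilon\bigr)\beta^n$, I would isolate $\sup(-u-s)^+$ on the left. This yields an inequality of exactly the claimed shape, after which $\epsilon\to 0^+$ and $s\to 0^+$ finish the proof.

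The hard part will be step 2: extracting the log-Orlicz $L^\infty$ estimate with the exact exponents $1/p$ and $1/n-1/p$ for the Dirichlet Monge-Amp\`ere problem on $B_1(\bm{0})$. GPT's original argument is set on a compact K\"ahler manifold without boundary and pivots on an auxiliary Monge-Amp\`ere function; to adapt it here one must construct the auxiliary solution so that it vanishes on $\partial B_1(\bm{0})$ (so the boundary integration by parts has the correct sign) and then track the weight and exponent choices through the $L^p$-$L^\infty$ iteration so that precisely the exponents $1/p$ and $1/n-1/p$ appear in the final statement. Once that $L^\infty$ estimate is in place, the remaining rearrangement is bookkeeping.
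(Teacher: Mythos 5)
There is a genuine gap, and it sits exactly where you flag it: step 2. The log-Orlicz $L^\infty$ estimate you invoke for the Dirichlet Monge--Amp\`ere problem, $\Vert\psi_{s,k}\Vert_{L^\infty}\leq C\bigl(\int g_{s,k}\ln^p(1+g_{s,k})\beta^n+1\bigr)^{1/p}\Vert g_{s,k}\Vert_{L^1}^{1/n-1/p}$, is not a known off-the-shelf fact in this precise two-factor form; it is a statement of essentially the same nature and depth as Theorem~\ref{theorem-3-2} itself (a Kolodziej-type bound with explicit exponents), and proving it would require rerunning the very GPT-type machinery the theorem is meant to encapsulate. The paper's proof never needs an $L^\infty$ bound on $\psi_{s,k}$ at all. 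After the maximum-principle step $\Phi\leq 0$, it only uses the much softer exponential integrability $\int_{B_1(\bm{0})}\exp(-\alpha\psi_{s,k})\beta^n\leq C$ from \cite{Kolodziej1998}\cite{WangWangZhou2020} (valid because the Monge--Amp\`ere mass is normalized to $1$), which via $\Phi\leq 0$ gives \eqref{inequality-3-20}; it then bounds $A_s$ by H\"older's inequality with respect to the measure $(f^+)^n\beta^n$ together with a generalized Young inequality pairing the exponential with the $\ln^p$ weight, and finally runs a De Giorgi iteration in the level parameter $s$ on $\phi(s)=\int_{\Omega_s}((f^+)^n+\epsilon)\beta^n$. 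That iteration in $s$ is what converts integral information into the pointwise bound; your plan has no analogue of it because you are trying to extract a sup bound from a single application of an (unproven) $L^\infty$ estimate at fixed $s$.

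Even granting your step 2, the concluding ``bookkeeping'' is not routine. The entropy of $g_{s,k}$ contains $\ln^p\bigl(1+(-u-s)/A_{s}\bigr)$ with $A_s$ in the denominator, and you only have the upper bound $A_s\leq \sup(-u-s)^+\cdot\int_{\Omega_s}((f^+)^n+\epsilon)\beta^n$, which goes the wrong way for that term; absorbing it requires a genuine argument (e.g. $\ln(1+x)\lesssim_\delta x^\delta$ with $\delta$ small, plus the comparison $\int_{\Omega_s}((f^+)^n+\epsilon)\beta^n\leq C_n\bigl(\int((f^+)^n+\epsilon)\ln^p(1+(f^+)^n+\epsilon)\beta^n+1\bigr)$ to recover constants independent of $\Vert f\Vert_{L^\infty}$). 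So as written the proposal is incomplete at its central step and glosses a nontrivial absorption at the end; the paper's route through exponential integrability, Young's inequality, and De Giorgi iteration avoids both difficulties.
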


\begin{proof}

The result of \cite{Kolodziej1998}\cite{WangWangZhou2020} implies that, 
there are positive constants $\alpha$ and $C$ such that
\begin{equation*}
\int_{\Omega_s} \exp \left( \frac{\alpha n^2 \lambda}{n + 1}  A^{- \frac{1}{n}}_{s,k} (- u - s)^{\frac{n + 1}{n}}\right) \beta^n
\leq 
\int_{B_1 (\bm{0})} \exp \left( - \alpha \psi_{s,k}\right) \beta^n \leq C ,
\end{equation*}
where $C$ is independent of $\epsilon$. 
Letting $k \to \infty$, we have
\begin{equation}
\label{inequality-3-20}
\int_{\Omega_s} \exp \left( \frac{\alpha n^2 \lambda}{n + 1}  A^{- \frac{1}{n}}_{s} (- u - s)^{\frac{n + 1}{n}}\right) \beta^n \leq C .
\end{equation}

By H\"older inequality with respect to measure $(f^+)^n \beta^n$,
\begin{equation}
\label{inequality-3-21}
\begin{aligned}
	A_s
	&= \int_{\Omega_s} (- u - s) \left((f^+)^n +\epsilon \right)\beta^n \\
	&\leq
	\left(\int_{\Omega_s}    (-u - s)^{\frac{(n + 1) p}{n}} \left( (f^+)^n +\epsilon \right) \beta^n\right)^{\frac{n}{(n + 1) p}} \left(\int_{\Omega_s}  \left( (f^+)^n +\epsilon\right) \beta^n  \right)^{\frac{(n + 1)p - n}{(n + 1) p}} .
\end{aligned}
\end{equation}
Applying generalized Young's inequality and \eqref{inequality-3-20} to \eqref{inequality-3-21},
\begin{equation}
\label{inequality-3-22}
\begin{aligned}
	A_s
	&\leq 
	A^{\frac{1}{n + 1}}_s \left(\frac{\alpha n^2 \lambda}{2(n + 1)}  \right)^{-\frac{n}{n + 1}}
		\left( \int_{\Omega_0}  \left( (f^+)^n +\epsilon\right)  \ln^p \left(1 +  (f^+)^n +\epsilon \right)  \beta^n
		+ C	\right)^{\frac{n}{(n + 1) p}} \\
		&\qquad 
		\cdot \left(\int_{\Omega_s}   \left( (f^+)^n +\epsilon\right)  \beta^n  \right)^{\frac{(n + 1)p - n}{(n + 1) p}} 
		,
\end{aligned}
\end{equation}
where $C$ is independent of $\epsilon$.
Rewriting \eqref{inequality-3-22},
\begin{equation*}
\begin{aligned}
	A_s
	&\leq
		\frac{2(n + 1)}{\alpha n^2 \lambda}
			\left( \int_{\Omega_0}  \left( (f^+)^n +\epsilon\right)   \ln^p \left(1 +  (f^+)^n +\epsilon  \right)  \beta^n
			+ C	\right)^{\frac{1}{p}} \\  
			&\qquad \qquad\qquad  \cdot\left(\int_{\Omega_s} 
			 \left( (f^+)^n +\epsilon\right)  \beta^n  \right)^{\frac{(n + 1)p - n}{n p}} 
			.
\end{aligned}
\end{equation*}
For any $s',s > 0$,
\begin{equation*}
\begin{aligned}
	s' \int_{\Omega_{s + s'}} \left((f^+)^n + \epsilon\right) \beta^n 
	&\leq
	\int_{\Omega_s } (- u - s) \left((f^+)^n + \epsilon\right) \beta^n \\	
	&\leq
	\frac{2(n + 1)}{\alpha n^2 \lambda}
		\left( \int_{\Omega_0}  \left( (f^+)^n +\epsilon\right)   \ln^p \left(1 +  (f^+)^n +\epsilon  \right)  \beta^n	+ C	\right)^{\frac{1}{p}} \\
		&\qquad\qquad \qquad \cdot  \left(\int_{\Omega_s}   \left( (f^+)^n +\epsilon\right)  \beta^n  \right)^{\frac{(n + 1)p - n}{n p}} 
\end{aligned}
\end{equation*}
By De Giorgi iteration~(Lemma~\ref{lemma-3-1}),
\begin{equation*}
\int_{\Omega_{d}} \left( (f^+)^n + \epsilon\right) \beta^n = 0 ,
\end{equation*}
when 
\begin{equation*}
\begin{aligned}
	d 
	&\geq 
	2^{\frac{np + 2p - 2n}{p - n}} \frac{n + 1}{\alpha n^2 \lambda}
			\left( \int_{\Omega_0}  \left( (f^+)^n +\epsilon\right)   \ln^p \left(1 +  (f^+)^n +\epsilon  \right)  \beta^n		+ C	\right)^{\frac{1}{p}}  \\
			&\qquad \qquad \qquad \qquad\qquad \cdot\left(\int_{\Omega_0} \left( (f^+)^n + \epsilon\right) \beta^n\right)^{\frac{1}{n} - \frac{1}{p}}
			.
\end{aligned}
\end{equation*}
Since $\epsilon > 0$ is arbitrary and $C$ is independent of $\epsilon$, we obtain that 
\begin{equation*}
	- u \leq 
		2^{\frac{np + 2p - 2n}{p - n}} \frac{n + 1}{\alpha n^2 \lambda}
				\left( \int_{\Omega_0}  \left( (f^+)^n \right)   \ln^p \left(1 +  (f^+)^n   \right)  \beta^n		+ C	\right)^{\frac{1}{p}}  \left(\int_{\Omega_0} (f^+)^n   \beta^n\right)^{\frac{1}{n} - \frac{1}{p}}
				.
\end{equation*}

\end{proof}

Now we prove Theorem~\ref{theorem-1-4} from an intermediate step~\eqref{inequality-3-21} of the argument above.
\begin{proof}[Proof of Theorem~\ref{theorem-1-4}]

Following the proof of Theorem~\ref{theorem-3-2}, 
\begin{equation*}
\begin{aligned}
	A_s
	&=
	\int_{\Omega_s} (- u - s) \left((f^+)^n + \epsilon\right) \beta^n \\
	&= 
	\left(\frac{\alpha n^2 \lambda}{n + 1}\right)^{- \frac{n}{n + 1}} A^{\frac{1}{n + 1}}_s \int_{\Omega_s} \left( \frac{\alpha n^2 \lambda}{n + 1} A^{- \frac{1}{n}}_s (- u - s)^{\frac{n + 1}{n}}\right)^{\frac{n}{n + 1}} \left((f^+)^n + \epsilon\right) \beta^n 
	,
\end{aligned}
\end{equation*}
and hence
\begin{equation}
\label{inequality-3-30}
\begin{aligned}
	A_s 
	= 	
	 \frac{n + 1}{\alpha n^2 \lambda}   \left( \int_{\Omega_s} \left( \frac{\alpha n^2 \lambda}{n + 1} A^{- \frac{1}{n}}_s (- u - s)^{\frac{n + 1}{n}}\right)^{\frac{n}{n + 1}} \left((f^+)^n + \epsilon\right) \beta^n \right)^{\frac{n + 1}{n}} .
\end{aligned}
\end{equation}
Applying H\"older inequality with respect to measure $\beta^n$, we obtain that
\begin{equation}
\label{inequality-3-31}
\begin{aligned}
	&\quad \int_{\Omega_s} \left( \frac{\alpha n^2 \lambda}{n + 1} A^{- \frac{1}{n}}_s (- u - s)^{\frac{n + 1}{n}}\right)^{\frac{n}{n + 1}} \left((f^+)^n + \epsilon\right) \beta^n  \\
	&\leq 
	\left(\int_{\Omega_s} \left(\frac{\alpha n^2 \lambda}{n + 1} A^{- \frac{1}{n}}_s (- u - s)^{\frac{n + 1}{n}}\right)^{\frac{n p^2}{(n + 1)p^2 - n p + n^2}} \beta^n\right)^{1 - \frac{n (p - n)}{(n + 1) p^2}} \\
	&\qquad
	\cdot
	\left(\int_{\Omega_s} \left((f^+)^n + \epsilon\right)^{\frac{p}{n}} \beta^n\right)^{\frac{n^2}{(n + 1) p^2}}
	\left(\int_{\Omega_s} \left((f^+)^n + \epsilon\right) \beta^n\right)^{1 - \frac{n}{(n + 1) p}} 
	\\
	&\leq
	C 	\left(\int_{\Omega_s} \left((f^+)^n + \epsilon\right)^{\frac{p}{n}} \beta^n\right)^{\frac{n^2}{(n + 1) p^2}}
		\left(\int_{\Omega_s} \left((f^+)^n + \epsilon\right) \beta^n\right)^{1 - \frac{n}{(n + 1) p}} 
		,
\end{aligned}
\end{equation}
where $C$ is independent of $\epsilon$. 
The last line in \eqref{inequality-3-31} is due to \eqref{inequality-3-20}.
Substituting \eqref{inequality-3-31} into \eqref{inequality-3-30},
\begin{equation*}
	A_s 
	\leq \frac{C}{\lambda}
	\left(\int_{\Omega_s} \left((f^+)^n + \epsilon\right)^{\frac{p}{n}} \beta^n\right)^{\frac{n}{ p^2}}
			\left(\int_{\Omega_s} \left((f^+)^n + \epsilon\right) \beta^n\right)^{1 + \frac{1}{n} - \frac{1}{p}} . 
\end{equation*}
For any $s', s > 0$,
\begin{equation*}
\begin{aligned}
	&\quad s' \int_{\Omega_{s + s'}} \left((f^+)^n + \epsilon\right) \beta^n \\
	&\leq \frac{C}{\lambda}
		\left(\int_{\Omega_0} \left((f^+)^n + \epsilon\right)^{\frac{p}{n}} \beta^n\right)^{\frac{n}{ p^2}}
				\left(\int_{\Omega_s} \left((f^+)^n + \epsilon\right) \beta^n\right)^{1 + \frac{1}{n} - \frac{1}{p}} 
				. 
\end{aligned}
\end{equation*}
By De Giorgi iteration~(Lemma~\ref{lemma-3-1}),
\begin{equation*}
\int_{\Omega_d} \left( (f^+)^n + \epsilon\right) \beta^n = 0 ,
\end{equation*}
when 
\begin{equation*}
d \geq 
2^{\frac{np + p - n}{p - n}} \frac{C}{\lambda}
		\left(\int_{\Omega_0} \left((f^+)^n + \epsilon\right)^{\frac{p}{n}} \beta^n\right)^{\frac{n}{ p^2}} \left(\int_{\Omega_0} \left( (f^+)^n + \epsilon\right) \beta^n\right)^{\frac{1}{n} - \frac{1}{p}}
		.
\end{equation*}
Letting $\epsilon \to 0+$,
\begin{equation*}
\begin{aligned}
	- u
	&\leq
	\frac{C}{\lambda} 		
	\left(\int_{\Omega_0}  (f^+)^p   \beta^n\right)^{\frac{n}{ p^2}} 
	\left(\int_{\Omega_0}  (f^+)^n  \beta^n\right)^{\frac{1}{n} - \frac{1}{p}}
	\\
	&\leq 
	\frac{C}{\lambda}
	\left(\int_{\Omega_0}  (f^+)^p   \beta^n\right)^{\frac{n}{ p^2}} 
	\left(\int_{\Omega_0}  (f^+)^p  \beta^n\right)^{\frac{n}{p}\left(\frac{1}{n} - \frac{1}{p}\right)} \left(\int_{\Omega_0} \beta^n \right)^{\left(1 - \frac{n}{p}\right)\left(\frac{1}{n} - \frac{1}{p}\right)} \\
	&\leq
	\frac{C }{\lambda}
	\left(\int_{\Omega_0} (f^+)^p \beta^n\right)^{\frac{1}{p}}
	.
\end{aligned}
\end{equation*}

\end{proof}

\begin{remark}

Replacing the auxiliary complex Monge-Amp\`ere equation~\eqref{equation-3-3} by a real Monge-Amp\`ere equation, then we can derive Alexandroff-Bakelman-Pucci estimate with the $L^\infty$-estimate by Wang~\cite{Wang2009}. 
The proof will be simpler as the De Giorgi iteration is not in need, while we need to require $f^+ \in L^n$ where $n$ is the real dimension.

\end{remark}

\medskip
\section{Harnack inquality}
\label{Harnack}

In this section, we shall prove Harnack inequalities for viscosity solutions.  
By Lemma~\ref{lemma-2-7} and a standard argument, the H\"older regularity can be derived from a Harnack inequality. 
The proofs in this section closely follow Caffarelli-Cabre~\cite{CaffarelliCabre} and Han-Lin~\cite{HanLin1}. 
For completeness and confidence (coefficients, dependence etc.), we write down some proofs in details here for later use, e.g. the calculation of modulus of continuity. 

%
%
%

%

We denote the cube centered at origin with length $r$ by $Q_r$. Then the following inclusion sequence holds true:
\begin{equation*}
	B_{\frac{1}{4}} (\bm{0})\subset B_{\frac{1}{2}} (\bm{0})\subset Q_1 \subset Q_3 \subset B_{2\sqrt{2n}} (\bm{0}).
\end{equation*}
Function $g$ is defined by
\begin{equation*}
		g (z) := - M \left( 1 - \frac{|\bm{z}|^2}{8n}\right)^\gamma \qquad \text{ in } B_{2\sqrt{2n}}(\bm{0})
\end{equation*}
where 
$ \gamma := (128n - 1) (n - 1) \frac{\Lambda}{\lambda} + 128 n$, 
and $M > 0$ is chosen  large enough that
$ g \leq - 2 \text{ in } Q_3 $.

Suppose that $f$ is a continuous bounded function and $u$ belongs to $\overline{\mathcal{S} } (\lambda,\Lambda,f)$  in  $B_{2 \sqrt{2n} }(\bm{0})$, we shall consider the following function 
\begin{equation*}
w := u + g \qquad \text{ in } B_{2\sqrt{2n}} (\bm{0}) .
\end{equation*}
Suppose that $\varphi$ is a $C^2$ function such that $w - \varphi$ has a local minimum at $\bm{z_0} \in B_{2\sqrt{2n}} (\bm{0})$,
we derive from the definition that
\begin{equation*}
\begin{aligned}
	&
	\mathcal{M}^- \left(\sqrt{-1} \partial\bar\partial \varphi (\bm{z_0}), \lambda, \Lambda\right) - \mathcal{M}^+ \left(\sqrt{-1} \partial\bar\partial g (\bm{z_0}), \lambda, \Lambda\right) \\
	&\leq
	\mathcal{M}^- \left(\sqrt{-1} \partial\bar\partial (\varphi - g) (\bm{z_0}),\lambda,\Lambda\right) \leq f (\bm{z_0}) .
\end{aligned}
\end{equation*}
We calculate the complex Hessian matrix of $g$,
\begin{equation*}
\begin{aligned}
	\partial_i \bar\partial_j g
	&= 
	\frac{M \gamma}{8 n} \left(1 - \frac{|\bm{z}|^2}{8n}\right)^{\gamma - 2} \left(\left(1 - \frac{|\bm{z}|^2}{8n}\right) \delta_{ij} - \frac{\gamma - 1}{8 n} \bar z^i z^j\right)
	,
\end{aligned}
\end{equation*}
and hence the eigenvalue set of $\sqrt{-1} \partial\bar\partial g$ can be stated as 
\begin{equation*}
\lambda_1 = \cdots = \lambda_{n - 1} = \frac{M \gamma}{8 n} \left(1 - \frac{|\bm{z}|^2}{8n}\right)^{\gamma - 1} > 0
\end{equation*}
and
\begin{equation*}
\lambda_n 
= \frac{M \gamma}{8 n} \left(1 - \frac{|\bm{z}|^2}{8n}\right)^{\gamma - 2} \left( 1  - \frac{\gamma }{8n} |\bm{z}|^2\right)
.
\end{equation*}
When $\lambda_n \leq 0$, 
\begin{equation*}
\begin{aligned}
	&\quad \mathcal{M}^+ (\sqrt{-1}\partial\bar\partial g (\bm{z}) , \lambda , \Lambda) \\
	&=
	\frac{M \gamma}{8 n} \left(1 - \frac{|\bm{z}|^2}{8n}\right)^{\gamma - 2} \left((n - 1) \Lambda \left(1 - \frac{|\bm{z}|^2}{8 n}\right) + \lambda \left(1 - \frac{\gamma}{8n} |\bm{z}|^2\right)\right) ,
\end{aligned}
\end{equation*}
and as a result
\begin{equation*}
	\mathcal{M}^+ (\sqrt{-1}\partial\bar\partial g (\bm{z}), \lambda, \Lambda) \leq 0, \qquad \forall |\bm{z}| \geq \frac{1}{4}.
\end{equation*}
It implies that if $|\bm{z_0}| \geq \frac{1}{4}$,
\begin{equation*}
	\mathcal{M}^- \left(\sqrt{-1} \partial\bar\partial \varphi (\bm{z_0}), \lambda, \Lambda\right) 
	\leq f (\bm{z_0}) ,
\end{equation*} 
and meanwhile if $|\bm{z_0}| < \frac{1}{4}$,
\begin{equation*}
	\mathcal{M}^- \left(\sqrt{-1} \partial\bar\partial \varphi (\bm{z_0}), \lambda, \Lambda\right) 
	\leq 
	f (\bm{z_0}) + \mathcal{M}^+ \left(\sqrt{-1} \partial\bar\partial g (\bm{z_0}), \lambda, \Lambda\right) 
	.
\end{equation*}
Therefore, we conclude that
\begin{equation*}
	w \in \overline{\mathcal{S}} \left( \lambda, \Lambda, f + \mathcal{M}^+ \left(\sqrt{-1} \partial\bar\partial g  , \lambda, \Lambda\right) \right) 
	\qquad \text{ in } B_{2\sqrt{2n}} (\bm{0}) .
\end{equation*}
In particular, $\mathcal{M}^+ \left(\sqrt{-1} \partial\bar\partial g  , \lambda, \Lambda\right) \in C_0 (B_{\frac{1}{4}} (\bm{0}) ) \cap Lip (B_{2\sqrt{2n}} (\bm{0}))$ and 
\begin{equation}
\label{inequality-4-15}
0 \leq \mathcal{M}^+ \left(\sqrt{-1} \partial\bar\partial g  , \lambda, \Lambda\right) \leq  \frac{M \gamma \Lambda}{8}  .
\end{equation}

\medskip

\subsection{Modulus of continuity}

\begin{lemma}
\label{lemma-4-2}
Let $f$ be a continuous and bounded function. 
Suppose that  $p > n$ and $u \in  \overline{\mathcal{S} } (\lambda,\Lambda,f)$ in  $B_{2 \sqrt{2n} }(\bm{0})$.  
Then there are constants $c_0 \in (0,1)$, $\mu \in (0,1)$ and $M > 1$ depending on $n$, $p$, $\lambda$ and $\Lambda$, 
such that 
\begin{equation*}
|\{ u < M\} \cap Q_1| > \mu ,
\end{equation*}
given that
\begin{equation*}
\inf_{B_{2\sqrt{2n}} (\bm{0})} u \geq 0, \qquad 
\inf_{Q_3} u \leq 1, \qquad
\text{ and } \; 
\int_{B_{2\sqrt{2n}} (\bm{0})} (f^+)^n \ln^p (1 + (f^+)^n) \beta^n\leq c_0 .
\end{equation*}

\end{lemma}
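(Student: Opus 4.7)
The strategy is to apply the ABP-type estimate (Theorem~\ref{theorem-3-2}, extended from $B_1(\bm{0})$ to $B_{2\sqrt{2n}}(\bm{0})$ by the comparison-principle argument noted after Theorem~\ref{theorem-1-4}) to the barrier-shifted function $w := u + g$ already set up in the preceding paragraphs. The barrier $g$ plays two roles: it forces $\sup(-w)^+ \geq 1$ from the hypothesis $\inf_{Q_3} u \leq 1$, and it supplies a nonnegative forcing term $\mathcal{M}^+(\sqrt{-1}\partial\bar\partial g,\lambda,\Lambda)$ that is compactly supported in $B_{1/4}(\bm{0}) \subset Q_1$, which is exactly what will confine the final measure estimate to $Q_1$.

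First I would verify the ABP hypotheses on $B_{2\sqrt{2n}}(\bm{0})$. Since $g$ vanishes on $\partial B_{2\sqrt{2n}}(\bm{0})$ and $u \geq 0$ there, $w \geq 0$ on the boundary. Picking $\bm{z_0} \in Q_3$ with $u(\bm{z_0}) \leq 1$ and using $g \leq -2$ on $Q_3$ gives $w(\bm{z_0}) \leq -1$, so $\sup_{B_{2\sqrt{2n}}(\bm{0})}(-w)^+ \geq 1$. Combined with the fact that $w \in \overline{\mathcal{S}}(\lambda,\Lambda,\tilde f)$ for $\tilde f := f + \mathcal{M}^+(\sqrt{-1}\partial\bar\partial g,\lambda,\Lambda)$, Theorem~\ref{theorem-3-2} yields
\begin{equation*}
1 \leq \frac{C}{\lambda}(a_w + 1)^{1/p}\, b_w^{\,1/n - 1/p},
\end{equation*}
where $a_w := \int_{\{w<0\}}(\tilde f^+)^n \ln^p(1+(\tilde f^+)^n)\beta^n$ and $b_w := \int_{\{w<0\}}(\tilde f^+)^n \beta^n$.

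Using $(\tilde f^+)^n \leq 2^n(f^+)^n + 2^n (\mathcal{M}^+(\sqrt{-1}\partial\bar\partial g,\lambda,\Lambda))^n$, the elementary bound $\ln(1+a+b) \leq \ln 2 + \ln(1+a) + \ln(1+b)$, estimate~\eqref{inequality-4-15}, and $c_0 \leq 1$, the quantity $a_w$ is bounded by a constant $K = K(n,p,\lambda,\Lambda)$. Since $1/n - 1/p > 0$, solving the displayed inequality for $b_w$ forces $b_w \geq c_1$ for some explicit $c_1 = c_1(n,p,\lambda,\Lambda) > 0$. Next I would split
\begin{equation*}
c_1 \leq b_w \leq 2^n \int_{\{w<0\}}(f^+)^n \beta^n + 2^n \left(\frac{M\gamma\Lambda}{8}\right)^n \bigl|\{w<0\}\cap B_{1/4}(\bm{0})\bigr|,
\end{equation*}
and control the first integral by Chebyshev: for every $T > 0$, $\int_{B_{2\sqrt{2n}}(\bm{0})}(f^+)^n \beta^n \leq T \cdot |B_{2\sqrt{2n}}(\bm{0})| + c_0/\ln^p(1+T)$. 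Fixing $T$ small so that $2^n T \cdot |B_{2\sqrt{2n}}(\bm{0})| \leq c_1/4$ and then $c_0$ small (depending only on $T$, hence only on $n,p,\lambda,\Lambda$) so that $2^n c_0/\ln^p(1+T) \leq c_1/4$, we conclude $|\{w<0\}\cap B_{1/4}(\bm{0})| \geq \mu$ for some $\mu = \mu(n,p,\lambda,\Lambda) \in (0,1)$.

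Finally, since $B_{1/4}(\bm{0}) \subset Q_1$ and $-g \leq M$ pointwise, the inclusion $\{w<0\}\cap B_{1/4}(\bm{0}) \subset \{u<-g\}\cap Q_1 \subset \{u<M\}\cap Q_1$ delivers the desired bound. The main obstacle is the bookkeeping in the third step: the constants $T$ and $c_0$ must be chosen in the correct order so that all absorbed error terms lie strictly below $c_1$ while keeping $c_0$'s dependence purely on $n,p,\lambda,\Lambda$. A secondary subtlety is the rigorous invocation of Theorem~\ref{theorem-3-2} on $B_{2\sqrt{2n}}(\bm{0})$ rather than the unit ball, which relies on the scaling/comparison-principle extension noted in the text.
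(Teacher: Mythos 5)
Your proposal is correct and follows the same strategy as the paper: apply the ABP-type estimate (Theorem~\ref{theorem-3-2}) to the barrier-shifted function $w = u + g$ on $B_{2\sqrt{2n}}(\bm{0})$, absorb the $f^+$ contributions using the entropy hypothesis, and read off the measure bound from the term supported in $B_{1/4}(\bm{0})$. The only cosmetic divergence is in bounding $\int (f^+)^n\,\beta^n$: the paper uses a $c_0$-dependent threshold $c_0^{1/(n(p+1))}$ to derive the explicit rate in~\eqref{inequality-4-20}, whereas you fix the Chebyshev threshold $T$ first and then shrink $c_0$ --- both give the needed smallness and the same dependence of $c_0$ on $(n,p,\lambda,\Lambda)$.
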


\begin{proof}

Since
\(
\inf_{B_{2\sqrt{2n}} (\bm{0})} w \leq -1 
\)
and
\(
w \geq 0
\)
on $\partial B_{2\sqrt{2n}} (\bm{0})$, 
\begin{equation}
\label{inequality-4-18}
\begin{aligned}
	1 
	&\leq
	C	\left( \int_{\left\{ w (\bm{z}) < 0\right\}}  \left( (f^+ + \eta)^n \right)   \ln^p \left(1 +  (f^+ + \eta)^n   \right)  \beta^n	+ 1	\right)^{\frac{1}{p}} \\
	&\qquad \cdot \left(\int_{\left\{ w (\bm{z}) < 0\right\}} (f^+ + \eta)^n   \beta^n\right)^{\frac{1}{n} - \frac{1}{p}} \\
	&\leq 
	C	\left( \int_{\left\{ w (\bm{z}) < 0\right\}}  (f^+)^n \left(  \ln^p (1 + (f^+)^n )   + 1 \right) \beta^n	+ 1	\right)^{\frac{1}{p}}  \\
	&\qquad \cdot \left(\left(\int_{\left\{ w (\bm{z}) < 0\right\}} (f^+)^n   \beta^n\right)^{\frac{1}{n}} + \left(\int_{\left\{ w (\bm{z}) < 0\right\} \cap B_{\frac{1}{4}} (\bm{0})}  \beta^n\right)^{\frac{1}{n}}\right)^{1 - \frac{n}{p}} \\
	&\leq 
	C	\left( \int_{B_{2\sqrt{2n} } (\bm{0})}  (f^+)^n  \ln^p (1 + (f^+)^n ) \beta^n	 +  \int_{B_{2\sqrt{2n} } (\bm{0})}  (f^+)^n  \beta^n	 + 1	\right)^{\frac{1}{p}}  \\
	&\qquad \cdot \left(\left(\int_{B_{2\sqrt{2n}} (\bm{0})} (f^+)^n   \beta^n\right)^{\frac{1}{n}} + \left(\int_{\left\{ w (\bm{z}) < 0\right\} \cap B_{\frac{1}{4}} (\bm{0})}  \beta^n\right)^{\frac{1}{n}}\right)^{1 - \frac{n}{p}}
	,
\end{aligned}
\end{equation}
by Theorem~\ref{theorem-3-2} and Inequality~\eqref{inequality-4-15}. 

Observing that
\begin{equation*}
\begin{aligned}
	&\quad \int_{B_{2\sqrt{2n}} (\bm{0})} (f^+)^n \ln^p (1 + (f^+)^n) \beta^n \\
	&\geq
	\ln^p (1 + c_0^{\frac{1}{p + 1}}) \int_{\{f^+ > c_0^{\frac{1}{n (p + 1)}}\}} (f^+)^n \beta^n +  \int_{\{f^+ \leq c_0^{\frac{1}{n (p + 1)}}\}} (f^+)^n \ln^p (1 + (f^+)^n) \beta^n \\
	&\geq 
	\ln^p (1 + c_0^{\frac{1}{p + 1}}) \int_{B_{2\sqrt{2n}} (\bm{0})} (f^+)^n \beta^n    - \ln^p (1 + c_0^{\frac{1}{p + 1}})  \int_{\{f^+ \leq c_0^{\frac{1}{n (p + 1)}}\}} (f^+)^n\beta^n \\
	&\geq
	\ln^p (1 + c_0^{\frac{1}{p + 1}}) \int_{B_{2\sqrt{2n}} (\bm{0})} (f^+)^n \beta^n -   (16 n)^n n! \omega_{2n} c_0^{\frac{1}{p + 1}} \ln^p (1 + c_0^{\frac{1}{p + 1}} ) 
	,
\end{aligned} 
\end{equation*}
we obtain that
\begin{equation}
\label{inequality-4-20}
\begin{aligned}
	\int_{B_{2\sqrt{2n} } (\bm{0})} (f^+)^n \beta^n 
	&\leq  \frac{c_0}{\ln^p (1 + c_0^{\frac{1}{p + 1}})} +   (16n)^{n} n! \omega_{2n} c_0^{\frac{1}{p + 1}} 
	&\leq 
	\left(2^p + (16n)^n n! \omega_{2n}\right) c_0^{\frac{1}{p + 1}} 
	,
\end{aligned} 
\end{equation}
where $\omega_{2n}$ is the volume of $B_1 ({\bm{0}})$.
Substituting \eqref{inequality-4-20} into \eqref{inequality-4-18},
\begin{equation*}
\begin{aligned}
	1 
	&\leq 
	C   \left(\left(2^p + (16n)^n n! \omega_{2n}\right)^{\frac{1}{n}} {c_0}^{\frac{1}{n(p + 1)}}  +  \left(\int_{\left\{ w (\bm{z}) < 0\right\} \cap B_{\frac{1}{4}} (\bm{0})}  \beta^n\right)^{\frac{1}{n}}\right)^{1 - \frac{n}{p}} \\
	&\leq 
	C \left(c^{\frac{1}{n(p + 1)}}_0 + \left|\left\{w(\bm{z}) < 0\right\} \cap B_{\frac{1}{4}} (\bm{0})\right|^{\frac{1}{n}}\right)^{1 - \frac{n}{p}}
	,
\end{aligned}
\end{equation*}
and then taking the $\frac{p}{p-n}$-th power,
\begin{equation*}
\begin{aligned}
	1 
	&\leq
	C \left(c^{\frac{1}{n(p + 1)}}_0 + \left|\left\{w(\bm{z}) < 0\right\} \cap B_{\frac{1}{4}} (\bm{0})\right|^{\frac{1}{n}}\right) .
\end{aligned}
\end{equation*}
Choosing $c_0 \in (0,1)$ so small that
\begin{equation*}
	C {c_0}^{\frac{1}{n(p + 1)}} < \frac{1}{2} ,
\end{equation*} 
we obtain that
\begin{equation}
\label{inequality-4-24}
	C \left|\left\{w(\bm{z}) < 0\right\} \cap B_{\frac{1}{4}} (\bm{0})\right|^{\frac{1}{n}} > \frac{1}{2}. 
\end{equation}
In $B_{2\sqrt{2n}} (\bm{0})$, 
\begin{equation*}
	w (\bm{z}) 
	\geq u (\bm{z}) - M ,
\end{equation*}
and thus we derive from \eqref{inequality-4-24} that
\begin{equation*}
\left|\{ u (\bm{z})< M\} \cap Q_1\right|  \geq \left|\{u (\bm{z}) < M\} \cap B_{\frac{1}{4}} (\bm{0})\right| > \mu  
\end{equation*}
for some $\mu > 0$.

\end{proof}

We shall show by induction that
\begin{equation}
\label{inequality-4-27}
	\left| \{u(\bm{z}) \geq M^k\} \cap Q_1 \right| \leq (1 - \mu)^k
\end{equation}
for $k = 1,2,\cdots$, where $M$ and $\mu$ are as in Lemma~\ref{lemma-4-2}.                                         
%
%
%
%
Lemma~\ref{lemma-4-2} ensures that  Inequality~\eqref{inequality-4-27} holds true for $k = 1$.
Suppose that \eqref{inequality-4-27} holds true for $k - 1$, then we define
\begin{equation*}
	A := \{u (\bm{z}) \geq M^k\} \cap Q_1, \qquad B := \{u(\bm{z}) \geq M^{k - 1}\} \cap Q_1 ,
\end{equation*}
and plan to prove $|A| \leq (1 - \mu) |B|$.
It suffices to prove the following lemma to apply Calderon-Zygmund decomposition argument 
since 
$|A| \leq 1 - \mu$.
\begin{lemma}
\label{lemma-4-2-1}
If $  Q_r (\bm{z_0}) \subset Q_1$ 
satisfies
\begin{equation}
\label{inequality-4-36}
	|A \cap  Q_r (\bm{z_0})| > (1 - \mu) | Q_r (\bm{z_0})| ,
\end{equation}
then 
\begin{equation*}
	Q_{3r} (\bm{z_0}) \cap Q_1 \subset B = \{u(\bm{z}) \geq M^{k - 1}\} \cap Q_1 .
\end{equation*}
\end{lemma}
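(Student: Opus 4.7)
The plan is to argue by contradiction, reducing the present statement to Lemma~\ref{lemma-4-2} via a dyadic rescaling. Suppose there existed a point $\bm{y} \in Q_{3r}(\bm{z_0}) \cap Q_1$ at which $u(\bm{y}) < M^{k-1}$. Define the rescaled function
$$\tilde u(\bm{z}) := M^{-(k-1)} u(\bm{z_0} + r\bm{z}), \qquad \bm{z} \in B_{2\sqrt{2n}}(\bm{0}),$$
which is well-defined because the inclusion $Q_r(\bm{z_0}) \subset Q_1$ forces $r \leq 1$, and combined with $\bm{z_0} \in Q_1$ this ensures $\bm{z_0} + r B_{2\sqrt{2n}}(\bm{0}) \subset B_{2\sqrt{2n}}(\bm{0})$, the domain on which $u$ lives in $\overline{\mathcal{S}}$. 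The strategy is then to verify the three hypotheses of Lemma~\ref{lemma-4-2} for $\tilde u$, extract a lower density bound on $\{\tilde u < M\} \cap Q_1$, and pull it back to contradict \eqref{inequality-4-36}.

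Verifying the hypotheses of Lemma~\ref{lemma-4-2} for $\tilde u$: nonnegativity $\tilde u \geq 0$ on $B_{2\sqrt{2n}}(\bm{0})$ is inherited from $u \geq 0$, and since $\bm{z^*} := (\bm{y} - \bm{z_0})/r \in Q_3$ with $\tilde u(\bm{z^*}) = u(\bm{y})/M^{k-1} < 1$, the condition $\inf_{Q_3} \tilde u \leq 1$ also holds. For the Pucci inclusion, the scaling identity $\sqrt{-1}\partial\bar\partial \tilde u(\bm{z}) = (r^2/M^{k-1})\,\sqrt{-1}\partial\bar\partial u(\bm{z_0} + r\bm{z})$ places $\tilde u \in \overline{\mathcal{S}}(\lambda, \Lambda, \tilde f)$ with $\tilde f(\bm{z}) := (r^2/M^{k-1})\, f(\bm{z_0} + r\bm{z})$. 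The technically delicate step is the logarithmic integral bound on $\tilde f$: using $r^2/M^{k-1} \leq 1$ (from $r \leq 1$ and $M^{k-1} \geq 1$), a change of variables $\bm{w} = \bm{z_0} + r\bm{z}$ (under which $d\beta^n$ scales by $r^{-2n}$) together with the monotonicity of $t \mapsto t \ln^p(1+t)$ should yield
$$\int_{B_{2\sqrt{2n}}(\bm{0})} (\tilde f^+)^n \ln^p(1 + (\tilde f^+)^n)\,\beta^n \leq M^{-n(k-1)} \int_{B_{2\sqrt{2n}}(\bm{0})} (f^+)^n \ln^p(1 + (f^+)^n)\,\beta^n \leq c_0.$$

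With the hypotheses in place, Lemma~\ref{lemma-4-2} delivers $|\{\tilde u < M\} \cap Q_1| > \mu$, which after unwinding the scaling and using $Q_r(\bm{z_0}) \subset Q_1$ translates to $|\{u < M^k\} \cap Q_r(\bm{z_0})| > \mu |Q_r(\bm{z_0})|$. Taking complements inside $Q_r(\bm{z_0})$ forces $|A \cap Q_r(\bm{z_0})| < (1-\mu)|Q_r(\bm{z_0})|$, directly contradicting \eqref{inequality-4-36}. Hence no such $\bm{y}$ exists, so every point of $Q_{3r}(\bm{z_0}) \cap Q_1$ satisfies $u \geq M^{k-1}$, which is the desired inclusion. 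The main obstacle is precisely the integral verification for $\tilde f$: every other piece is either a direct scaling identity or elementary measure-theoretic bookkeeping, whereas this single step relies crucially on the contraction $r^2/M^{k-1} \leq 1$ and the monotonicity of $t \ln^p(1+t)$ to preserve the smallness constant $c_0$ through the rescaling.
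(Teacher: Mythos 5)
Your proposal is correct and follows essentially the same route as the paper: argue by contradiction, rescale via $\bm{z} \mapsto \bm{z_0} + r\bm{z}$ and divide by $M^{k-1}$, check the hypotheses of Lemma~\ref{lemma-4-2} (with the key point being the invariance of the logarithmic integral bound under the scaling, exactly as in the paper), and pull the measure estimate back to contradict \eqref{inequality-4-36}. The only loose point is the domain inclusion $\bm{z_0} + rB_{2\sqrt{2n}}(\bm{0}) \subset B_{2\sqrt{2n}}(\bm{0})$: it does not follow from ``$r \leq 1$ and $\bm{z_0} \in Q_1$'' taken separately, but rather from the coupled bound $r \leq 1 - 2\max_i |z^i_0|$ implied by $Q_r(\bm{z_0}) \subset Q_1$, which is how the paper states it.
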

\begin{proof}

Suppose that 
\begin{equation*}
	Q_{3r} (\bm{z_0}) \cap Q_1 \not\subset B.
\end{equation*}
We may pick $\bm{z_1} \in Q_{3r} (\bm{z_0})$ with $u (\bm{z_1}) < M^{k - 1}$. 
Define
\begin{equation*}
\bm{z} := \bm{z_0} + r\bm{w} 
\end{equation*}
and
\begin{equation*}
	\tilde u (\bm{w}) := \frac{1}{M^{k - 1}} u(\bm{z}) .
\end{equation*}
It is easy to see that $r \leq 1 - 2 \Vert\bm{z_0}\Vert_{max} := 1 - 2 \max_i |z^i_0|$. 
Then
\begin{equation*}
	\bm{z} 
	= \bm{z_0} + r \bm{w} 
	\in \bm{z_0} + r B_{2\sqrt{2n}} (\bm{0}) 
	\subset \bm{z_0} + \left(1 - 2 \Vert\bm{z_0}\Vert_{max}\right) B_{2\sqrt{2n}} (\bm{0}) 
	\subset  B_{2\sqrt{n}} (\bm{0})  
	,
\end{equation*}
and 
\begin{equation*}
	\bm{z} 
	= \bm{z_0} + r \bm{w} 
	\in \bm{z_0} + r Q_3 
	\subset \bm{z_0} + \left(1 - 2 \Vert\bm{z_0}\Vert_{max}\right) Q_3
	\subset  Q_3
	.
\end{equation*}
So $\tilde u (\bm{w}) \geq 0$ in $B_{2\sqrt{2n}} (\bm{0})$ and $\inf_{Q_3} \tilde u (\bm{w}) \leq 1$.

In the viscosity sense,
\begin{equation*}
\begin{aligned}
	\mathcal{M}^- (\sqrt{-1} \partial_{\bm{w}}\bar\partial_{\bm{w}} \tilde u (\bm{w}), \lambda, \Lambda)
	&=  \mathcal{M}^- \left(\sqrt{-1} \partial_{\bm{w}}\bar\partial_{\bm{w}} \left(\frac{1}{M^{k - 1}} u \left(\bm{z_0} + r \bm{w}\right)\right), \lambda, \Lambda\right) \\
	&\leq 
	\frac{r^2}{M^{k - 1}} f (\bm{z_0} + r\bm{w}),
\end{aligned}
\end{equation*}
and 
\begin{equation*}
\begin{aligned}
	&\quad
	\int_{B_{2\sqrt{2n}} (\bm{0})} \left(\frac{r^2}{M^{k - 1}} f(\bm{z_0} + r \bm{w})^+\right)^n \ln^p \left(1 + \left(\frac{r^2}{M^{k - 1}} f(\bm{z_0} + r \bm{w})^+\right)^n\right) \beta^n (\bm{w}) \\
	&\leq
	\int_{B_{2\sqrt{2n}} (\bm{0})} \frac{1}{M^{n(k - 1)}} \left( f(\bm{z})^+\right)^n \ln^p \left(1 + \left(\frac{r^2}{M^{k - 1}} f(\bm{z})^+\right)^n\right) \beta^n (\bm{z}) \\
	&\leq 
	\frac{1}{M^{n(k - 1)}} \int_{B_{2\sqrt{2n}} (\bm{0})} \left( f(\bm{z})^+\right)^n \ln^p \left(1 + \left(f(\bm{z})^+\right)^n\right) \beta^n (\bm{z}) \\
	&\leq c_0 
	.
\end{aligned}
\end{equation*}
By Lemma~\ref{lemma-4-2}, 
\begin{equation*}
	|\{\tilde u (\bm{w}) < M\} \cap Q_1| > \mu,
\end{equation*}
while
\begin{equation*}
\begin{aligned}
|\{\tilde u  (\bm{w})< M\} \cap Q_1| 
&= 
\frac{1}{r^{2n}} \left|\left\{  u (\bm{z}) < M^k\right\} \cap Q_r (\bm{z_0})\right|
.
\end{aligned}
\end{equation*}
Then
\begin{equation*}
\left|\left\{  u (\bm{z}) < M^k\right\} \cap Q_r (\bm{z_0})\right| > \mu r^{2n} ,
\end{equation*}
which contradicts \eqref{inequality-4-36}.

\end{proof}
%
%
%
%
%
%
%
%
%
%
%
%
%
By Lemma~\ref{lemma-4-2-1} and Calderon-Zygmund decomposition,
\begin{equation*}
|A| \leq (1 - \delta) |B| .
\end{equation*}
By induction,
\begin{equation*}
	\left| \{u(\bm{z}) \geq M^k\} \cap Q_1 \right| \leq (1 - \mu)^k .
\end{equation*}
Taking $C = \frac{1}{1 - \mu}$ and $c_1 = - \frac{\ln (1 - \mu)}{\ln M}$, we obtain the following lemma.

\begin{lemma}
\label{lemma-4-3}
Let $f$ be a continuous and bounded function. 
Suppose that $p > n$ and $u \in  \overline{\mathcal{S} } (\lambda,\Lambda,f)$ in  $B_{2 \sqrt{2n} }(\bm{0})$. 
Then there are positive constants $c_0 $, $c_1$ and $C$, depending  on $n$, $p$, $\lambda$ and $\Lambda$, such that for any $t > 0$, 
\begin{equation*}
	\left|\{ u \geq t\} \cap Q_1\right| \leq C t^{- c_1},
\end{equation*}
given that
\begin{equation*}
	\inf_{B_{2\sqrt{2n}} (\bm{0})} u \geq 0 , \qquad \inf_{Q_3} u \leq 1, \qquad \text{ and }\; \int_{B_{2\sqrt{2n}} (\bm{0})} (f^+)^n \ln^p (1 + (f^+)^n) \beta^n \leq c_0 .
\end{equation*}

\end{lemma}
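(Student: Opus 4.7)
The plan is to recognize that the estimates developed in the paragraphs immediately preceding this statement already furnish the geometric decay $|\{u \geq M^k\} \cap Q_1| \leq (1-\mu)^k$ for every positive integer $k$, so the only remaining task is a routine interpolation of this dyadic bound into a power-law bound valid for every real $t > 0$.

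First I would verify that the hypotheses of Lemma~\ref{lemma-4-3} are exactly those needed by Lemma~\ref{lemma-4-2}, which gives the base case $|\{u \geq M\} \cap Q_1| \leq 1 - \mu$. For the induction, set $A_k := \{u \geq M^k\} \cap Q_1$. Assuming $|A_{k-1}| \leq (1-\mu)^{k-1}$, I would feed the pair $(A_k, A_{k-1})$ into the Calderon-Zygmund decomposition: the total-measure estimate $|A_k| \leq 1-\mu$ is inherited from the previous inductive step, and Lemma~\ref{lemma-4-2-1} supplies the crucial non-local ingredient, namely that whenever a sub-cube $Q_r(\bm{z_0}) \subset Q_1$ has $A_k$-density exceeding $1-\mu$, its triple $Q_{3r}(\bm{z_0}) \cap Q_1$ is contained in $A_{k-1}$. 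Calderon-Zygmund then yields $|A_k| \leq (1-\mu)|A_{k-1}|$, and induction closes the loop.

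To convert the discrete decay into the advertised power law, I set $c_1 := -\ln(1-\mu)/\ln M$, so that $M^{c_1} = (1-\mu)^{-1}$. For $0 < t \leq M$ the trivial bound $|\{u \geq t\} \cap Q_1| \leq 1$ is already majorized by $(1-\mu)^{-1} t^{-c_1}$, since $t^{-c_1} \geq M^{-c_1} = 1-\mu$. For $t > M$, I pick the unique integer $k \geq 1$ with $M^k \leq t < M^{k+1}$; monotonicity of the superlevel sets in $t$ gives $|\{u \geq t\} \cap Q_1| \leq (1-\mu)^k$, while $t^{-c_1} > M^{-(k+1)c_1} = (1-\mu)^{k+1}$, so that $(1-\mu)^k < (1-\mu)^{-1} t^{-c_1}$. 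This delivers the lemma with $C = (1-\mu)^{-1}$ and the claimed exponent.

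The substantive content has already been absorbed by Theorem~\ref{theorem-3-2}, the barrier construction around $g$, and Lemma~\ref{lemma-4-2-1}; the present lemma is a purely combinatorial repackaging, and I do not expect any obstacle beyond the bookkeeping above. If any hard point were to arise, it would be confirming that the rescaling in Lemma~\ref{lemma-4-2-1} keeps the logarithmic $L^n$ integral below $c_0$; but this is automatic since the rescaled right-hand side is $r^2 M^{-(k-1)} f$ with $r \leq 1$ and $M > 1$, and $x \mapsto x^n \ln^p(1+x^n)$ is monotone, so contraction of the argument only improves the bound.
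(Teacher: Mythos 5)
Your proposal is correct and follows essentially the same route as the paper: the paper likewise deduces $\left|\{u \geq M^k\}\cap Q_1\right| \leq (1-\mu)^k$ by induction from Lemma~\ref{lemma-4-2}, Lemma~\ref{lemma-4-2-1} and the Calderon-Zygmund decomposition, and then obtains the lemma with $C = (1-\mu)^{-1}$ and $c_1 = -\ln(1-\mu)/\ln M$. Your explicit interpolation between the dyadic levels $M^k \leq t < M^{k+1}$ is precisely the bookkeeping the paper leaves implicit, and it checks out.
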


Now we use the above lemma to obtain a Harnack inequality. 
\begin{lemma}
\label{lemma-4-6}
Let $f$ be a continuous and bounded function. 
Suppose that $p > n$ and $u \in  {\mathcal{S} } (\lambda,\Lambda,f)$ in  $Q_{4\sqrt{2n}}$ . 
Then there are positive constants $c_0$  and $C$, depending on $n$, $p$, $\lambda$ and $\Lambda$, such that $\sup_{Q_{\frac{1}{4}}} u \leq C$, 
in the event that
\begin{equation*}
	\inf_{Q_{4\sqrt{2n}}} u \geq 0,  \qquad \inf_{Q_{\frac{1}{4}}} u \leq 1, \qquad \text{and} \qquad \int_{Q_{4\sqrt{2n}}} |f|^n \ln^p (1 + |f|^n) \beta^n \leq c_0 ,
\end{equation*}

\end{lemma}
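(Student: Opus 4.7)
\emph{Overall strategy.} The plan is to combine the weak-type estimate of Lemma~\ref{lemma-4-3}, which only exploits the supersolution half $u\in\overline{\mathcal{S}}(\lambda,\Lambda,f)$, with the subsolution half $u\in\underline{\mathcal{S}}(\lambda,\Lambda,f)$, via a rescaled barrier and iteration of Krylov--Safonov type as in Caffarelli--Cabr\'e~\cite{CaffarelliCabre}. First, I would verify that Lemma~\ref{lemma-4-3} applies in our setting: since $B_{2\sqrt{2n}}(\bm{0})\subset Q_{4\sqrt{2n}}$, $Q_{\frac{1}{4}}\subset Q_3$, and the Orlicz-type integral bound is preserved under restriction, Lemma~\ref{lemma-4-3} directly yields
\[
\left|\{u\geq t\}\cap Q_1\right| \leq C\, t^{-c_1}, \qquad \forall\, t>0.
\]

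\emph{Barrier / one-step propagation.} I would argue by contradiction: assume $u(\bm{z}_0)\geq K$ for some $\bm{z}_0\in Q_{\frac{1}{4}}$ with $K$ arbitrarily large. Using the subsolution property, I would construct a rescaled and recentered version of the function $g$ built at the start of Section~\ref{Harnack}, placed on a ball $B_{r}(\bm{z}_0)$ of height $\sim K$ with $r$ to be chosen. The eigenvalue computation already carried out for $g$ shows, after rescaling, that $\mathcal{M}^+$ of this barrier is nonpositive outside a small core region, and that its integral contribution in the Orlicz weight $|\cdot|^n\ln^p(1+|\cdot|^n)$ is absorbed into the small constant $c_0$ (using the scale-invariance argument established in the proof of Lemma~\ref{lemma-4-2-1}). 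By viscosity comparison against $\mathcal{M}^+(\sqrt{-1}\partial\bar\partial u)\geq f$, the contradiction at the point where the barrier would touch $u$ from above forces the existence of $\bm{z}_1\in B_{r}(\bm{z}_0)$ with $u(\bm{z}_1)\geq \theta K$, where $\theta>1$ and $r\leq C K^{-\alpha}$ for some $\alpha\in(0,c_1/(2n))$ depending only on $n,p,\lambda,\Lambda$.

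\emph{Iteration and contradiction.} Iterating this one-step propagation produces a sequence $\{\bm{z}_k\}$ with $u(\bm{z}_k)\geq \theta^k K$ and $\bm{z}_{k+1}\in B_{r_k}(\bm{z}_k)$, $r_k\leq C(\theta^k K)^{-\alpha}$. Because $\sum_k r_k \leq C K^{-\alpha}\sum_k \theta^{-k\alpha}$ can be made smaller than $\frac{1}{4}$ by taking $K$ large, all $\bm{z}_k$ remain inside $Q_{\frac{1}{2}}\subset Q_1$. On each ball $B_{r_k}(\bm{z}_k)$ the barrier argument in fact shows $|\{u\geq \theta^k K\}\cap B_{r_k}(\bm{z}_k)|\geq c\, r_k^{2n}$, hence
\[
\left|\{u\geq \theta^k K\}\cap Q_1\right| \geq c\, (\theta^k K)^{-2n\alpha}.
\]
Comparing with the polynomial decay $C(\theta^k K)^{-c_1}$ from Lemma~\ref{lemma-4-3} and using $2n\alpha < c_1$, we reach a contradiction for $k$ large. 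This forces $K\leq C(n,p,\lambda,\Lambda)$ and hence $\sup_{Q_{\frac{1}{4}}}u\leq C$.

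\emph{Main obstacle.} The delicate step is the construction of the rescaled complex barrier: one must choose the exponent $\gamma$ (as for the original $g$, essentially $\gamma = (128n-1)(n-1)\Lambda/\lambda + 128n$) so that the split eigenvalue structure of $\sqrt{-1}\partial\bar\partial$ of a radial function keeps $\mathcal{M}^+$ nonpositive outside a small core, and simultaneously verify that after the scaling $\bm{z}\mapsto \bm{z}_k+r_k\bm{w}$ the Orlicz norm $\int|f|^n\ln^p(1+|f|^n)\beta^n$ of the rescaled right-hand side together with the barrier remainder stays below $c_0$ (the same calculation as in Lemma~\ref{lemma-4-2-1}). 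Once this is in place, the iteration and the measure decay from Lemma~\ref{lemma-4-3} combine mechanically to give the Harnack bound.
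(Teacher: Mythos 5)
Your skeleton (the weak-type decay of Lemma~\ref{lemma-4-3}, a one-step ``doubling'' of the value of $u$ at a nearby point, and an iteration with radii $r_k\sim(\theta^k P)^{-c_1/2n}$ summing to less than $\tfrac14$) is the same as the paper's, but the core one-step propagation is not actually established in your write-up, and the mechanism you propose for it would fail. You claim that a recentered, rescaled copy of the barrier $g$, touched against $u$ from above and compared via $\mathcal{M}^+(\sqrt{-1}\partial\bar\partial u)\geq f$, ``forces the existence of $\bm{z}_1$ with $u(\bm{z}_1)\geq\theta K$.'' A pointwise comparison of this type cannot produce largeness of $u$ nearby: to contradict $u<\theta P$ on the dilated cube by comparison you would need a supersolution barrier that dominates $u$ on the boundary of the small cube yet dips below $P=u(\bm{z}_0)$ at the center, and the minimum principle for supersolutions forbids exactly that. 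The doubling step must exploit the \emph{measure} information, and that is what the paper does: assuming $u<\theta P$ on $Q_{4\sqrt{2n}r}(\bm{z_0})$, it forms $\tilde u(\bm{w})=\frac{\theta P-u(\bm{z_0}+r\bm{w})}{(\theta-1)P}$, which is nonnegative, equals $1$ at $\bm{w}=\bm{0}$, and lies in $\overline{\mathcal{S}}$ with rescaled right-hand side (this is precisely where the subsolution half of $u$ is used, and where $(\theta-1)P>1$ keeps the Orlicz integral below $c_0$); applying Lemma~\ref{lemma-4-3} to $\tilde u$ with $t=\frac{2\theta-1}{2\theta-2}$ shows that $\{u\leq P/2\}$ occupies less than half of $Q_r(\bm{z_0})$, contradicting the estimate \eqref{inequality-4-58} that $\{u>P/2\}$ also occupies less than half. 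Your proposal never performs this transformation, so the doubling step is a genuine gap rather than a routine verification.

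A second, smaller gap is the closing contradiction. You assert without proof that ``the barrier argument in fact shows $|\{u\geq\theta^kK\}\cap B_{r_k}(\bm{z}_k)|\geq c\,r_k^{2n}$'' and then play this off against the decay of Lemma~\ref{lemma-4-3}; knowing $u(\bm{z}_k)\geq\theta^kK$ at a single point gives no such lower bound on a superlevel set, and justifying it would again require the very device (applying Lemma~\ref{lemma-4-3} to a normalized copy of $\theta^{k+1}K-u$) that is missing from your doubling step. The paper needs nothing of the sort: once the sequence $\{\bm{z}_k\}$ is confined to $B_{1/2}(\bm{0})$ by the summability of the radii, the inequalities $u(\bm{z}_k)\geq\theta^kP\to\infty$ already contradict the boundedness of the continuous function $u$ on a compact subset, which forces $P\leq M_0$ and hence the stated bound.
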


\begin{proof}

We define
\begin{equation*}
\theta := 2^{- \frac{c_1 + 1}{c_1}} C^{- \frac{1}{c_1}} + 1
\end{equation*}
and 
\begin{equation*}
	M_0 
	:= 
	2 \left( 2^{\frac{8n + 1 }{2n}} n C^{\frac{1}{2n}} \sum^\infty_{k = 1} \theta^{- (k - 1) \frac{c_1}{2n}} \right)^{\frac{2n}{c_1}} 
	+ \frac{1}{\theta - 1} 
\end{equation*}
where $c_1$ and $C$ are from Lemma~\ref{lemma-4-3}.
We shall show that 
there exists a sequence $\{\bm{z_k}\} \subset B_{\frac{1}{2}} (\bm{0})$ with
\begin{equation*}
	u (\bm{z_k}) \geq \theta^k P \qquad \text{for } k = 0 , 1, 2, \cdots,
\end{equation*}
if $u (\bm{z_0}) = P > M_0$ for some $\bm{z_0} \in B_{\frac{1}{4}} (\bm{0})$
which contradicts the boundedness of $u$. 

Suppose that $u (\bm{z_0}) = P > M_0$ for some $\bm{z_0} \in B_{\frac{1}{4}} (\bm{0})$.
Since
\begin{equation*}
	\inf_{Q_3} u \leq \inf_{Q_{\frac{1}{4}}} u \leq 1,
\end{equation*}
Lemma~\ref{lemma-4-3} implies that
\begin{equation}
\label{inequality-4-50}
	\left| \left\{ u > \frac{P}{2}\right\} \cap Q_1\right| 
	\leq C \left(\frac{P}{2}\right)^{- c_1} 
	< C \left(\frac{M_0}{2}\right)^{- c_1} 
	.
\end{equation}
Choosing 
\begin{equation*}
r = (2 C)^{\frac{1}{2n}} \left(\frac{M_0}{2}\right)^{- \frac{c_1}{2n}} < \frac{1}{16 n},
\end{equation*} 
we have
$
	Q_r (\bm{z_0}) \subset Q_1
$
and
\begin{equation}
\label{inequality-4-58}
	\frac{1}{|Q_r (\bm{z_0})|} \left| \left\{ u > \frac{P}{2}\right\} \cap Q_r (\bm{z_0})\right| 
	<
	\frac{1}{2}.
\end{equation}

Next, we shall prove by contradiction that  $u \geq \theta P$ at some point in $Q_{4\sqrt{2n} r} (\bm{z_0})$.
Suppose that $u < \theta P$ in $Q_{4\sqrt{2n} r} (\bm{z_0})$. 
For $ \bm{w} \in Q_{4\sqrt{2n}} $ and $\bm{z} \in Q_{4\sqrt{2n} r} (\bm{z_0})$, we define
\begin{equation*}
	\bm{z} := \bm{z_0} + r\bm{w} ,
\end{equation*}
and
\begin{equation*}
	\tilde u (\bm{w}) := \frac{\theta P - u (\bm{z})}{(\theta - 1) P}.
\end{equation*}
It is easy to see that
\begin{equation*}
	\tilde u (\bm{w}) \geq \frac{\theta P - \theta P}{(\theta - 1) P} = 0, \qquad \text{for } \bm{w} \in Q_{4\sqrt{2n}}
\end{equation*}
and
\begin{equation*}
	\tilde u (\bm{0}) = \frac{\theta P - u (\bm{z_0})}{(\theta - 1) P} = \frac{\theta P - P}{(\theta - 1) P} = 1.
\end{equation*}
In the viscosity sense,
\begin{equation*}
\begin{aligned}
	\mathcal{M}^- (\sqrt{-1} \partial_{\bm{w}}\bar\partial_{\bm{w}} \tilde u (\bm{w}), \lambda, \Lambda)
	&\leq 
	- \frac{r^2}{(\theta -1) P} f (\bm{z_0} + r\bm{w}),
\end{aligned}
\end{equation*}
and similarly
\begin{equation*}
	\mathcal{M}^+ (\sqrt{-1} \partial_{\bm{w}}\bar\partial_{\bm{w}} \tilde u (\bm{w}), \lambda, \Lambda) 
	\geq
	- \frac{r^2}{(\theta - 1) P} f(\bm{z_0} + r\bm{w}) 
	. 
\end{equation*}
Moreover,
\begin{equation*}
\begin{aligned}
	&\quad
	\int_{B_{2\sqrt{2n}} (\bm{0})} \left(\frac{r^2}{(\theta - 1) P} | f(\bm{z_0} + r \bm{w})|\right)^n \ln^p \left(1 + \left(\frac{r^2}{(\theta - 1) P} | f(\bm{z_0} + r \bm{w})|\right)^n\right) \beta^n (\bm{w}) \\
	&\leq 
	\frac{1}{(\theta - 1)^n P^n} \int_{B_{2\sqrt{2n}} (\bm{0})}  |f(\bm{z})|^n \ln^p \left(1 +  \left(\frac{r^2}{(\theta - 1) P} | f(\bm{z})| \right)^n\right) \beta^n (\bm{z}) \\
	&\leq 
 	\int_{B_{2\sqrt{2n}} (\bm{0})}  |f(\bm{z})|^n \ln^p \left(1 +   | f(\bm{z})|^n\right) \beta^n (\bm{z})  \\
	&
	\leq c_0 
	,
\end{aligned}
\end{equation*}
since $(\theta - 1) P > (\theta - 1) M_0 > 1$.
%
Applying Lemma~\ref{lemma-4-3} to $\tilde u$,
\begin{equation*}
	\frac{1}{Q_r (\bm{z_0})} \left| \left\{ u \leq \frac{P}{2}\right\} \cap Q_r (\bm{z_0})\right|
	= 
	\left|\left\{ \tilde u \geq \frac{2\theta - 1}{2\theta - 2}\right\} \cap Q_1\right| 
	\leq 
	C\left( \frac{2\theta - 1}{2\theta  -  2}\right)^{-c_1}
	<
	\frac{1}{2} ,
\end{equation*}
which contradicts \eqref{inequality-4-58}. 
%
%
%
%
%
%
%
%
%
%
%
Therefore, there exists a point $\bm{z_1} \in Q_{4\sqrt{2n} r} (\bm{z_0}) \subset B_{4nr} (\bm{z_0})$ with $u(\bm{z_1}) \geq \theta P$.

By iteration, we can obtain a sequence $\{\bm{z_k}\}$ such that
\begin{equation*}
u (\bm{z_k}) \geq \theta^k P \qquad \text{for some } \bm{z_k} \in B_{4n r_k} (\bm{z_{k - 1}})
\end{equation*}
where
\begin{equation*}
r_k  = (2 C)^{\frac{1}{2n}} \left(\frac{M_0 }{2}\right)^{- \frac{c_1}{2n}} \theta^{- \frac{(k - 1) c_1}{2n}}
.
\end{equation*}
It can be seen that $\{\bm{z_k}\} \subset B_{\frac{1}{2}} (\bm{0})$,
by noting that
\begin{equation*}
\begin{aligned}
	\sum^{\infty}_{k = 1} 4n r_k 
	&= 
	4n \sum^\infty_{k = 1} (2 C)^{\frac{1}{2n}} \left(\frac{M_0 }{2}\right)^{- \frac{c_1}{2n}} \theta^{- \frac{(k - 1) c_1}{2n}} \\
	&<
	4n \sum^\infty_{k = 1} (2 C)^{\frac{1}{2n}}  \left( 2^{\frac{8n + 1 }{2n}} n C^{\frac{1}{2n}} \sum^\infty_{k = 1} \theta^{- (k - 1) \frac{c_1}{2n}} \right)^{-1}  \theta^{- \frac{(k - 1) c_1}{2n}} 
	= \frac{1}{4}
	.
\end{aligned}
\end{equation*}

\end{proof}

As in \cite{Luxemburg1955}\cite{Adams1975}, we define
\begin{equation*}
	A \left(c_0, f, \Omega\right)
	:= 
	\inf \left\{K > 0 \Bigg| \int_{\Omega} \frac{|f|^n}{K^n} \ln^p \left(1 + \frac{|f|^n}{K^n}\right) \beta^n \leq c_0\right\}
	.
\end{equation*}
If $\Omega_1 \subset \Omega_2$,
\begin{equation*}
\int_{\Omega_1} \frac{|f|^n}{K^n} \ln^p \left(1 + \frac{|f|^n}{K^n}\right) \beta^n 
\leq
\int_{\Omega_2} \frac{|f|^n}{K^n} \ln^p \left(1 + \frac{|f|^n}{K^n}\right) \beta^n 
,
\end{equation*}
and hence
\begin{equation*}
	A(c_0,f,\Omega_1) \leq A(c_0,f,\Omega_2) .
\end{equation*}
Consider for $\delta > 0$,
\begin{equation*}
u_\delta 
:= 
\frac{u}{\inf_{Q_{\frac{1}{4}}} u + \delta +	A \left(c_0, f, Q_{4\sqrt{2n}}\right)} .
\end{equation*}
Applying Lemma~\ref{lemma-4-6} to $u_\delta$,  we obtain
\begin{equation*}
	\sup_{Q_{\frac{1}{4}}} u \leq C \left(\inf_{Q_{\frac{1}{4}}} u + \delta +	A \left(c_0, f, Q_{4\sqrt{2n}}\right)\right) .
\end{equation*}
Letting $\delta \to 0 +$,
\begin{equation}
\label{inequality-4-70}
	\sup_{Q_{\frac{1}{4}}} u \leq C \left(\inf_{Q_{\frac{1}{4}}} u   +	A \left(c_0, f, Q_{4\sqrt{2n}}\right)\right) .
\end{equation}
Then a Harnack inequality follows from a standard covering argument and \eqref{inequality-4-70}. 
\begin{theorem}
\label{theorem-4-7}
Let $f$ be a continuous and bounded function. 
Suppose that $u \in  {\mathcal{S} } (\lambda,\Lambda,f)$ in  $B_1(\bm{0})$ is nonnegative. 
Then it holds true that
\begin{equation*}
	\sup_{B_{\frac{1}{2}} (\bm{0})} u \leq C \left\{ \inf_{B_{\frac{1}{2}} (\bm{0})} u + A \left(c_0, f, B_1(\bm{0})\right) \right\} 
\end{equation*}
where $C$ is a positive constant depending on $p$, $n$, $\lambda$ and $\Lambda$.

\end{theorem}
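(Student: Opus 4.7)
The plan is to deduce Theorem~\ref{theorem-4-7} from the interior estimate \eqref{inequality-4-70} via a standard rescaling and Harnack--chain argument. Fix once and for all $r_0 = \frac{1}{8n}$, so that for every $\bm{z_0} \in B_{\frac{1}{2}}(\bm{0})$ and every $r \leq r_0$ the cube $Q_{4\sqrt{2n}\,r}(\bm{z_0})$ sits inside $B_1(\bm{0})$ (its half-diagonal equals $4nr \leq \frac{1}{2}$).

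The first step is to prove the translated-and-rescaled form of \eqref{inequality-4-70}: for every such $\bm{z_0}$ and $r$,
\begin{equation*}
\sup_{Q_{r/4}(\bm{z_0})} u \leq C \left(\inf_{Q_{r/4}(\bm{z_0})} u + A(c_0, f, B_1(\bm{0}))\right),
\end{equation*}
with $C$ independent of $\bm{z_0}$ and $r$. The derivation is as follows: set $\tilde u(\bm{w}) := u(\bm{z_0} + r\bm{w})$, so that $\tilde u \in \mathcal{S}(\lambda,\Lambda,\tilde f)$ on $Q_{4\sqrt{2n}}$ with $\tilde f(\bm{w}) = r^2 f(\bm{z_0} + r\bm{w})$; the change of variables $\bm{z} = \bm{z_0} + r\bm{w}$, combined with the monotonicity of $t \mapsto \ln^p(1+t)$ and the fact that $r \leq 1$, yields
\begin{equation*}
A(c_0, \tilde f, Q_{4\sqrt{2n}}) \leq A(c_0, f, Q_{4\sqrt{2n}\,r}(\bm{z_0})) \leq A(c_0, f, B_1(\bm{0})).
\end{equation*}
Applying \eqref{inequality-4-70} to $\tilde u$ and undoing the rescaling then produces the claim.

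Next I would carry out the Harnack chain. Given any two points $\bm{x}, \bm{y} \in B_{\frac{1}{2}}(\bm{0})$, interpolate them by a chain $\bm{x} = \bm{w_0}, \bm{w_1}, \ldots, \bm{w_N} = \bm{y}$ in $B_{\frac{1}{2}}(\bm{0})$ with $\Vert \bm{w_{i+1}} - \bm{w_i}\Vert_\infty < r_0/8$; since $\operatorname{diam} B_{\frac{1}{2}}(\bm{0}) \leq 1$, one may take $N \leq N(n)$ uniformly. Each consecutive pair satisfies $\bm{w_{i+1}} \in Q_{r_0/4}(\bm{w_i})$, so the rescaled inequality with base point $\bm{w_i}$ and $r=r_0$ gives
\begin{equation*}
u(\bm{w_i}) \leq C\,u(\bm{w_{i+1}}) + C\,A(c_0, f, B_1(\bm{0})).
\end{equation*}
Iterating along the chain produces $u(\bm{x}) \leq C^N u(\bm{y}) + \tilde C\, A(c_0,f,B_1(\bm 0))$, and taking $\sup_{\bm{x} \in B_{1/2}(\bm 0)}$ on the left and $\inf_{\bm{y} \in B_{1/2}(\bm 0)}$ on the right delivers the desired Harnack inequality with a constant depending only on $n$, $p$, $\lambda$ and $\Lambda$.

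The main point to watch is the scaling of the Luxemburg-type norm $A(c_0, \cdot, \cdot)$: it is neither scale invariant nor a genuine $L^n$ norm, so the scaling behaviour is not completely automatic. The key observation is that $r \leq 1$ forces $r^{2n}|f|^n/K^n \leq |f|^n/K^n$ inside the logarithm, while the Jacobian $r^{-2n}$ from the change of variables exactly cancels the $r^{2n}$ multiplier coming from $|\tilde f|^n$; together these give the monotonicity $A(c_0, \tilde f, Q_{4\sqrt{2n}}) \leq A(c_0, f, B_1(\bm{0}))$. Modulo this bookkeeping, the argument is a routine chain/covering step.
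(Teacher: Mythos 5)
Your proposal is correct and follows essentially the same route as the paper: the paper obtains the cube Harnack inequality \eqref{inequality-4-70} and then invokes ``a standard covering argument,'' which is exactly the translated--rescaled estimate plus Harnack chain you spell out. Your bookkeeping for the Luxemburg-type quantity, namely that the Jacobian cancels the $r^{2n}$ factor and $\ln^p(1+r^{2n}|f|^n/K^n)\leq\ln^p(1+|f|^n/K^n)$ gives $A(c_0,\tilde f,Q_{4\sqrt{2n}})\leq A(c_0,f,B_1(\bm{0}))$, matches the scaling computations the paper itself performs right after Theorem~\ref{theorem-4-7}.
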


For any $0 < r < 1$, we define
\begin{equation*}
\bm{z}: = r\bm{w}
\end{equation*}
for $w \in B_1 (\bm{0})$ and $\bm{z} \in B_r (\bm{0})$, and hence
\begin{equation*}
\tilde u (\bm{w}) := u (\bm{z}) = u (r \bm{w}) .
\end{equation*}
Then $\tilde u \in \mathcal{S} (\lambda,\Lambda, r^2 f(r\bm{w}))$ in $B_1 (\bm{0})$.
By Theorem~\ref{theorem-4-7}, 
\begin{equation}
\label{inequality-4-74}
	\sup_{B_{\frac{r}{2}} (\bm{0})} u \leq C \left\{ \inf_{B_{\frac{r}{2}} (\bm{0})}  u + A \left(c_0, r^2 f(r\bm{w}), B_1(\bm{0})\right) \right\} .
\end{equation}
We calculate, 
\begin{equation*}
\begin{aligned}
&\quad
A \left(c_0, r^2 f(r\bm{w}), B_1(\bm{0})\right) \\
&=
\inf \left\{K > 0 \Bigg| \int_{B_1 (\bm{0})} \frac{r^{2n}|f (r \bm{w})|^n}{K^n} \ln^p \left(1 + \frac{r^{2n} |f(r\bm{w})|^n}{K^n}\right) \beta^n(\bm{w}) \leq c_0\right\} \\
&=
\inf \left\{K > 0 \Bigg| \frac{1}{K^n}\int_{B_r (\bm{0})} |f (\bm{z})|^n  \ln^p \left(1 + \frac{r^{2n} |f(\bm{z})|^n}{K^n}\right) \beta^n(\bm{z}) \leq c_0\right\} 
.
\end{aligned}
\end{equation*}
Since $c_0 > 0$ and for any fixed $K > 0$
\begin{equation*}
\begin{aligned}
&\quad \frac{1}{K^n}\int_{B_r (\bm{0})} |f (\bm{z})|^n  \ln^p \left(1 + \frac{r^{2n} |f(\bm{z})|^n}{K^n}\right) \beta^n(\bm{z}) \\
&\leq 
\frac{1}{K^n}\int_{B_r (\bm{0})} |f (\bm{z})|^n  \ln^p \left(1 + \frac{ |f(\bm{z})|^n}{K^n}\right) \beta^n(\bm{z}) 
\to 0 +
,
\end{aligned}
\end{equation*}
we obtain that
\begin{equation*}
A \left(c_0, r^2 f(r\bm{w}), B_1(\bm{0})\right) \to 0 + .
\end{equation*}
For $0 < r_1 < r_2$ and fixed $K > 0$,
\begin{equation*}
\begin{aligned}
&\quad \frac{1}{K^n}\int_{B_{r_1} (\bm{0})} |f (\bm{z})|^n  \ln^p \left(1 + \frac{{r_1}^{2n} |f(\bm{z})|^n}{K^n}\right) \beta^n(\bm{z}) \\
&\leq 
\frac{1}{K^n}\int_{B_{r_2} (\bm{0})} |f (\bm{z})|^n  \ln^p \left(1 + \frac{{r_2}^{2n} |f(\bm{z})|^n}{K^n}\right) \beta^n(\bm{z}) 
,
\end{aligned}
\end{equation*}
we know that $A \left(c_0, r^2 f(r\bm{w}), B_1 (\bm{0})\right)$ is nondecreasing.

Applying \eqref{inequality-4-74} to $\sup_{B_r (\bm{0})} u - u$ and $u - \inf_{B_r (\bm{0})} $ respectively, 
\begin{equation}
\label{inequality-4-79}
	 \sup_{B_r (\bm{0})} u - \inf_{B_{\frac{r}{2}} (\bm{0})} u 
	 \leq  C \left\{\inf_{B_r (\bm{0})}  u - \sup_{B_{\frac{r}{2}} (\bm{0})}  u + A \left(c_0, r^2 f(r\bm{w}), B_1(\bm{0})\right) \right\} ,
\end{equation}
and
\begin{equation}
\label{inequality-4-80}
	 \sup_{B_{\frac{r}{2}} (\bm{0})} u - \inf_{B_r (\bm{0})} u 
	 \leq  C \left\{\inf_{B_{\frac{r}{2}} (\bm{0})}  u - \sup_{B_r (\bm{0})}  u + A \left(c_0, r^2 f(r\bm{w}), B_1(\bm{0})\right) \right\} .
\end{equation}
Putting \eqref{inequality-4-79} and \eqref{inequality-4-80} together, 
\begin{equation}
\label{inequality-4-12-1}
	(C + 1) osc_{B_{\frac{r}{2}} (\bm{0})} u \leq (C - 1) osc_{B_r (\bm{0})} u + 2 C A \left(c_0, r^2 f(r\bm{w}), B_1(\bm{0})\right)  .
\end{equation}
By rescaling, we can also obtain a Liouville theorem if $f \equiv 0$ in $\mathbb{C}^n$.
%
%
%
%
%
%
%
%
%
%
%
%
%
%
%
%
%
%
%
For any $\mu \in (0,1)$ and $r \leq R$,
\begin{equation}
\label{inequality-4-82}
	osc_{B_r (\bm{0})} u \leq C \left( r^\alpha osc_{B_1 (\bm{0})} u + A \left(c_0, r^{2\mu} f(r^\mu\bm{w}), B_1(\bm{0})\right)  \right) ,
\end{equation}
by applying Lemma~\ref{lemma-2-7} to \eqref{inequality-4-12-1}. 

Now we need to know the modulus of continuity from $ A \left(c_0, r^{2\mu} f(r^\mu\bm{w}), B_1(\bm{0})\right) $.
For $p > q > n$ and $0 < r < 1$,
\begin{equation}
\label{inequality-4-83}
\begin{aligned}
	&\quad
	\int_{B_r (\bm{0})} |f|^n \ln^p (1 + |f|^n) \beta^n \\
	&\geq
	\ln^{p - q} \left(1 + \frac{1}{r^n}\right)\int_{B_r(\bm{0}) \cap \{|f| > \frac{1}{r}\}} |f|^n \ln^q (1 + |f|^n) \beta^n \\
	&\geq 
	\ln^{p - q} \left(1 + \frac{1}{r^n}\right)\int_{B_r(\bm{0})} |f|^n \ln^q (1 + |f|^n) \beta^n \\
	&\qquad - \ln^{p - q} \left(1 + \frac{1}{r^n} \right)\int_{B_r(\bm{0}) \cap\{|f| \leq \frac{1}{r}\}} \frac{1}{r^n} \ln^q \left(1 + \frac{1}{r^n} \right) \beta^n \\
	&\geq 
	\ln^{p - q} \left(1 + \frac{1}{r^n}\right) \int_{B_r(\bm{0})} |f|^n \ln^q (1 + |f|^n) \beta^n  -  2^n n! \omega_{2n} r^{n} \ln^p \left(1 + \frac{1}{r^n}\right)  
	.
\end{aligned}
\end{equation}
Rearranging \eqref{inequality-4-83}
\begin{equation*}
\begin{aligned}
	&\quad
	\int_{B_r (\bm{0})} |f|^n \ln^p (1 + |f|^n) \beta^n + 2^n n! \omega_{2n} r^{n}  \ln^p \left(1 + \frac{1}{r^n}\right)  \\
	&\geq 
	\ln^{p - q} \left(1 + \frac{1}{r^n}\right)\int_{B_r(\bm{0})} |f|^n \ln^q (1 + |f|^n) \beta^n \\
		&\geq
		n^{p - q} (- \ln r)^{p - q} \int_{B_r(\bm{0})} |f|^n \ln^q (1 + |f|^n) \beta^n 
		.
\end{aligned}
\end{equation*}
and hence
\begin{equation*}
\begin{aligned}
	\int_{B_r(\bm{0})} |f|^n \ln^q (1 + |f|^n) \beta^n 
	&\leq   	\frac{1}{n^{p - q}	\left(-\ln  r\right)^{p - q}} 	\int_{B_1 (\bm{0})} |f|^n \ln^p (1 + |f|^n) \beta^n  \\
	&\qquad  + 2^{n + q - 1} n! \omega_{2n} r^{n}  \left(\ln^q 2 + n^q (- \ln r)^q\right) 
	.
\end{aligned}
\end{equation*}
Then,
\begin{equation*}
	\frac{C(n,p,q)}{(- \ln r)^{p - q}} 	\int_{B_1 (\bm{0})} |f|^n \ln^p (1 + |f|^n) \beta^n + C (n,q) \sqrt{r} \geq 	\int_{B_r(\bm{0})} |f|^n \ln^q (1 + |f|^n) \beta^n .
\end{equation*}
In conclusion,
\begin{equation*}
\begin{aligned}
	\int_{B_r(\bm{0})} |f|^n \ln^q (1 + |f|^n) \beta^n 
	&\leq
	\frac{C (n,p,q)}{(- \ln r)^{p - q}} \left(\int_{B_1 (\bm{0})} |f|^n \ln^p (1 + |f|^n) \beta^n + 1\right) 
	\\
	&\leq \frac{C_2 }{(-\ln r)^{p - q}}
.
\end{aligned}
\end{equation*}
When  $r > 0$ small enough,
\begin{equation}
	\frac{c_0 (- \ln r)^{p - q} r^{2n}}{C_2} < 1 ,
\end{equation}
and consequently
\begin{equation*}
\begin{aligned}
	c_0
	&\geq 
	\frac{c_0 (- \ln r)^{p - q} }{C_2}
	\int_{B_r (\bm{0})} |f|^n \ln^q \left(1 +  |f|^n \right) \beta^n
	\\
	&\geq \int_{B_r (\bm{0})} \frac{c_0 (- \ln r)^{p - q} |f|^n}{C_2} \ln^q \left(1 + \frac{c_0 (- \ln r)^{p - q} r^{2n} |f|^n}{C_2 }\right) \beta^n
	.
\end{aligned}
\end{equation*}
Therefore
\begin{equation}
\label{inequality-4-90}
A (c_0, r^2 f(r\bm{w}),B_1(\bm{0})) \leq \frac{C^{\frac{1}{n}}_1}{c^{\frac{1}{n}}_0 (- \ln r)^{\frac{p - q}{n}}} .
\end{equation}
In fact, we can also reach the result via the generalized H\"older inequality for Orlicz space.
Substituting \eqref{inequality-4-90} into \eqref{inequality-4-82}, we obtain the modulus of continuity.
\begin{theorem}
Let $f$ be a continuous and bounded function. 
Suppose that $u \in  {\mathcal{S} } (\lambda,\Lambda,f)$ in  $B_1(\bm{0})$ is nonnegative. 
For any $  0 < k < \frac{p - n}{n}$, we have
\begin{equation*}
	osc_{B_r (\bm{0})} u \leq \frac{C}{(- \ln r)^k}  \left(osc_{B_1 (\bm{0})} u + \int_{B_1 (\bm{0})} |f|^n \ln^p (1 + |f|^n) \beta^n  + 1 \right) ,
\end{equation*}
where $C$ is a positive constant depending  on $p$, $n$, $k$, $\lambda$ and $\Lambda$. 
\end{theorem}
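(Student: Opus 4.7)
The plan is to combine the oscillation recurrence \eqref{inequality-4-82} with the logarithmic decay estimate \eqref{inequality-4-90}, via a judicious choice of the auxiliary parameters $q$ and $\mu$. Given $0 < k < (p-n)/n$, I first pick $q \in (n,p)$ such that $(p-q)/n > k$; this is possible precisely because the hypothesis $k < (p-n)/n$ is equivalent to $p - kn > n$, so any $q \in (n, p-kn)$ works. Fix any $\mu \in (0,1)$, whose value will affect only a multiplicative constant.

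Substituting $r \mapsto r^\mu$ in \eqref{inequality-4-90} gives
\[
A\bigl(c_0,\, r^{2\mu} f(r^\mu \bm{w}),\, B_1(\bm{0})\bigr) \leq \frac{C_1^{1/n}}{c_0^{1/n}\, \mu^{(p-q)/n}\, (-\ln r)^{(p-q)/n}},
\]
where $C_1$ depends linearly on $\int_{B_1(\bm{0})} |f|^n \ln^p(1+|f|^n) \beta^n + 1$. The elementary inequality $(x+1)^{1/n} \leq x+1$ for $x \geq 0$ absorbs the $1/n$ power, yielding an additive bound of the form in the theorem. For the polynomial term $r^\alpha$ appearing in \eqref{inequality-4-82}, I would use the fact that the function $r \mapsto r^\alpha (-\ln r)^k$ extends continuously to $[0,1]$ vanishing at both endpoints, hence is bounded on $(0,1)$, giving $r^\alpha \leq C_{\alpha,k} (-\ln r)^{-k}$. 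Since $(p-q)/n > k$, the same $(-\ln r)^{-k}$ also dominates the $A$-term after enlarging the constant. Plugging both estimates into \eqref{inequality-4-82} produces the stated inequality for $r \in (0, r_0)$, where $r_0$ is determined by the range of validity of \eqref{inequality-4-90}. On $[r_0, 1)$ the inequality is trivial after enlarging $C$, since $osc_{B_r(\bm{0})} u \leq osc_{B_1(\bm{0})} u$ and $(-\ln r)^{-k}$ is bounded below on any compact subinterval of $(0,1)$ while diverging as $r \to 1^-$; the $+1$ term in the theorem absorbs any remaining slack.

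I expect no serious analytic obstacle: the genuinely delicate work is already packaged in \eqref{inequality-4-90}, whose key maneuver is splitting the integral of $|f|^n \ln^p(1+|f|^n)$ over $\{|f| > 1/r\}$ and its complement to trade $\ln^p$ for $\ln^q$ at the cost of a factor $(-\ln r)^{p-q}$. The subtlety in the present final step is bookkeeping: verifying that $(p-n)/n$ is exactly the supremum of admissible decay exponents $k$ because the constraint $q > n$ in Lemma~\ref{lemma-4-3}, translated through $q < p - kn$, forces $k < (p-n)/n$. Any attempt to reach the endpoint $k = (p-n)/n$ would require $q = n$ and is hence not covered by the present method.
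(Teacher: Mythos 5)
Your proposal is correct and follows essentially the same route as the paper: substitute the decay bound \eqref{inequality-4-90} for the Orlicz-type quantity $A$ into the oscillation recurrence \eqref{inequality-4-82}, choosing $q \in (n, p-kn)$ so that $(p-q)/n > k$, with the remaining steps (bounding $r^\alpha$ by $C(-\ln r)^{-k}$, handling $r$ near $1$ and the small-$r$ validity range, absorbing constants) being the bookkeeping the paper leaves implicit. Your observation that the constraint $q>n$ from the Harnack machinery is what caps the decay exponent at $(p-n)/n$ matches the paper's intent.
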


\medskip

\subsection{H\"older regularity}

When $f$ is in $L^p$, we can lift modulus of continuity up to H\"older regularity. The proofs are almost the same, and the results seem very similar to those in Caffarelli-Cabre~\cite{CaffarelliCabre} and Han-Lin~\cite{HanLin1}. 
So we only state the results to clearly elaborate the exponents and coefficients, but omit the details of arguments. We have the following Harnack inequalities.

\begin{theorem}
\label{theorem-4-10}
Let $f$ be a continuous and bounded function. 
Suppose that $u \in  {\mathcal{S} } (\lambda,\Lambda,f)$ in  $B_1(\bm{0})$ is nonnegative. 
Then it holds true that
\begin{equation*}
	\sup_{B_{\frac{1}{2}} (\bm{0})} u \leq C \left\{ \inf_{B_{\frac{1}{2}} (\bm{0})} u + \Vert f \Vert_{L^p (B_1 (\bm{0}))}\right\} 
\end{equation*}
where $C$ is a positive constant depending on $p$, $n$, $\lambda$ and $\Lambda$.

\end{theorem}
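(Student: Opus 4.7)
The plan is to deduce Theorem~\ref{theorem-4-10} directly from the Orlicz-type Harnack inequality in Theorem~\ref{theorem-4-7} by comparing the implicit Luxemburg-type quantity $A(c_0,f,B_1(\bm{0}))$ with the $L^p$ norm. The task reduces to verifying that there is a constant $C = C(n,p,\lambda,\Lambda)>0$ such that
\begin{equation*}
A\left(c_0, f, B_1(\bm{0})\right) \leq C \Vert f\Vert_{L^p(B_1(\bm{0}))},
\end{equation*}
since once this comparison is established, Theorem~\ref{theorem-4-10} follows immediately by substitution into Theorem~\ref{theorem-4-7}.

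To establish the comparison, I would set $g := |f|/K$ for a parameter $K > 0$ to be chosen, and split the integral defining the Luxemburg-type quantity according to whether $g \leq 1$ or $g > 1$. On $\{g \leq 1\}$ the integrand $g^n \ln^p(1+g^n)$ is dominated by $(\ln 2)^p g^n$. On $\{g > 1\}$, the elementary growth estimate $\ln^p(1+t) \leq C_\alpha t^{\alpha}$, valid for $t \geq 1$ and any $\alpha > 0$, with the choice $\alpha = (p-n)/n$ yields $g^n \ln^p(1+g^n) \leq C g^p$. Combining these two regimes with H\"older's inequality (using $p>n$ and the finite volume of $B_1(\bm{0})$) to bound $\Vert g\Vert_{L^n(B_1(\bm{0}))}^n$ in terms of $\Vert g\Vert_{L^p(B_1(\bm{0}))}^n$ produces
\begin{equation*}
\int_{B_1(\bm{0})} g^n \ln^p(1+g^n)\, \beta^n \leq C\left( \Vert g\Vert_{L^p(B_1(\bm{0}))}^p + \Vert g\Vert_{L^p(B_1(\bm{0}))}^n\right).
\end{equation*}

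Finally, taking $K$ to be a sufficiently large multiple of $\Vert f\Vert_{L^p(B_1(\bm{0}))}$ makes $\Vert g\Vert_{L^p}$ correspondingly small and forces the right-hand side below $c_0$, which by the definition of $A$ yields the desired comparison. The main difficulty is essentially bookkeeping: one must verify that the resulting constant $C$ depends only on $n$, $p$, $\lambda$ and $\Lambda$ (with the $\lambda,\Lambda$ dependence entering through $c_0$ from Lemma~\ref{lemma-4-3}) and not on $f$ or $u$. Alternatively, one could invoke the generalized H\"older inequality for Orlicz spaces alluded to just before the theorem statement, which directly relates the Luxemburg norm to the $L^p$ norm when $p>n$, but the splitting argument above is more self-contained and keeps explicit track of the exponents.
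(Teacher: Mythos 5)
Your proposal is correct, but it takes a different route from the one the paper (implicitly) intends. The paper omits the proof of Theorem~\ref{theorem-4-10}, saying only that ``the proofs are almost the same'' as the preceding ones and as in Caffarelli--Cabr\'e and Han--Lin; the intended argument is evidently to re-run the Harnack chain of the previous subsection (the analogues of Lemma~\ref{lemma-4-2}, Lemma~\ref{lemma-4-3} and Lemma~\ref{lemma-4-6}) with the smallness hypothesis placed on $\Vert f\Vert_{L^p}$ and with the $L^p$-version of the ABP analogue (Theorem~\ref{theorem-1-4}) used in place of Theorem~\ref{theorem-3-2}. You instead deduce Theorem~\ref{theorem-4-10} directly from the already-proved Orlicz-type Harnack inequality, Theorem~\ref{theorem-4-7}, by showing $A(c_0,f,B_1(\bm{0}))\le C\,\Vert f\Vert_{L^p(B_1(\bm{0}))}$. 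This reduction is legitimate: the hypotheses of the two theorems coincide, and your splitting is sound --- on $\{|f|\le K\}$ the integrand is at most $(\ln 2)^p |f|^n/K^n$, on $\{|f|>K\}$ the bound $\ln^p(1+t)\le C_\alpha t^{\alpha}$ with $\alpha=(p-n)/n$ gives $|f|^p/K^p$ up to a constant, and H\"older on the finite ball controls the $L^n$ piece by the $L^p$ piece; choosing $K$ a large multiple (depending only on $n,p$ and $c_0$, hence on $n,p,\lambda,\Lambda$) of $\Vert f\Vert_{L^p}$ forces the integral below $c_0$, with the degenerate case $\Vert f\Vert_{L^p}=0$ trivial since then $f\equiv 0$ and $A=0$. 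Your route is shorter and self-contained for Theorem~\ref{theorem-4-10} itself; the paper's route of redoing the iteration with $L^p$ data is what one needs anyway for the companion local maximum principle (Theorem~\ref{theorem-4-13}), which does not follow from Theorem~\ref{theorem-4-7}, and it yields the clean scaling $r^{2(p-n)/p}\Vert f\Vert_{L^p(B_r)}$ exploited afterwards --- though that scaling also comes out of your comparison applied to $r^2f(r\bm{w})$, so nothing essential is lost.
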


\begin{theorem}
\label{theorem-4-13}
Let $f$ be a continuous and bounded function. 
Suppose that $u \in  \underline{\mathcal{S} } (\lambda,\Lambda,f)$ in $B_1 (\bm{0})$. 
Then it holds true that
\begin{equation}
\label{inequality-4-112-1}
	\sup_{B_{\frac{1}{2}} (\bm{0})} u \leq C \left\{ \Vert u^+ \Vert_{L^q (B_{\frac{3}{4}} (\bm{0}))} + \Vert f \Vert_{L^p (B_1 (\bm{0}))}\right\} 
\end{equation}
where $C$ is a positive constant depending on $p$, $q$, $n$, $\lambda$ and $\Lambda$.

\end{theorem}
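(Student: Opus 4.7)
The plan is to adapt the classical local maximum principle for subsolutions of fully nonlinear elliptic equations, as in Caffarelli-Cabr\'e~\cite{CaffarelliCabre} and Han-Lin~\cite{HanLin1}, to the complex Pucci framework. The argument parallels that of Theorem~\ref{theorem-4-10}, applied one-sidedly to $u \in \underline{\mathcal{S}}$, and rests on the complex ABP estimate Theorem~\ref{theorem-1-4}, the barrier $g$ from the start of Section~\ref{Harnack}, and the Calder\'on-Zygmund cube decomposition.

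First I would establish a measure-to-pointwise lemma dual to Lemma~\ref{lemma-4-2}: there exist constants $\epsilon_0, \mu_0 \in (0,1)$ and $M_0 > 1$, depending only on $n,p,\lambda,\Lambda$, such that any $u \in \underline{\mathcal{S}}(\lambda,\Lambda,f)$ in $B_{2\sqrt{2n}}(\bm{0})$ with $\|f\|_{L^p(B_{2\sqrt{2n}}(\bm{0}))} \leq \epsilon_0$ and $\sup_{Q_1} u \geq M_0$ must satisfy $|\{u > 1\} \cap Q_1| \geq \mu_0$. The construction introduces $v := A - u$ where $A$ is a constant chosen so that $v \geq 0$ on the relevant domain; the identity $\mathcal{M}^-(N) = -\mathcal{M}^+(-N)$ places $v \in \overline{\mathcal{S}}(\lambda,\Lambda,-f)$, and Theorem~\ref{theorem-1-4} combined with the barrier $g$ (exactly as used in the proof of Lemma~\ref{lemma-4-2}) produces the measure bound on $\{u > 1\} = \{v < A - 1\}$.

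Second, iterating this lemma via the Calder\'on-Zygmund cube decomposition, as in the proof of Lemma~\ref{lemma-4-3}, yields the decay
\begin{equation*}
|\{u > t\} \cap Q_1| \leq C t^{-c_1}, \qquad \forall\, t > 0,
\end{equation*}
for some $c_1 > 0$ under the normalization $\|f\|_{L^p(B_{2\sqrt{2n}}(\bm{0}))} \leq \epsilon_0$. Reversing the measure lemma (applied after rescaling $u \mapsto u / \sup_{Q_{1/4}} u$) converts the decay into the $L^{c_1}$-type bound
\begin{equation*}
\sup_{Q_{1/4}} u \leq C \left( \|u^+\|_{L^{c_1}(Q_1)} + \|f\|_{L^p(B_{2\sqrt{2n}}(\bm{0}))} \right),
\end{equation*}
which, after covering and scaling, gives \eqref{inequality-4-112-1} for $q = c_1$. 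For $q \geq c_1$ one applies H\"older on $B_{3/4}(\bm{0})$; for $0 < q < c_1$ one uses the standard interpolation
\begin{equation*}
\|u^+\|_{L^{c_1}(B_{3/4}(\bm{0}))}^{c_1} \leq \bigl(\sup_{B_{3/4}(\bm{0})} u^+\bigr)^{c_1 - q} \|u^+\|_{L^q(B_{3/4}(\bm{0}))}^q
\end{equation*}
on an intermediate radius and absorbs the $\sup u^+$ factor via Young's inequality.

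The main obstacle I anticipate is the first step: ensuring that the auxiliary complex Monge-Amp\`ere problem~\eqref{equation-3-3} underlying Theorem~\ref{theorem-1-4} interacts correctly with the one-sided subsolution information, so that the substitution $v = A - u$ does not degrade the quantitative measure bound in the presence of only an $L^p$-control on $f^+$. Once this foundational lemma is in place, the Krylov-Safonov iteration and H\"older-type interpolation follow the real-variable template essentially verbatim, as the excerpt indicates.
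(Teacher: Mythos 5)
The paper itself gives no written proof of Theorem~\ref{theorem-4-13} (it defers to the classical local maximum principle in Caffarelli-Cabr\'e and Han-Lin), and your overall plan --- run that classical argument on top of the complex ABP analogue (Theorem~\ref{theorem-1-4}), the barrier $g$, and the Calder\'on-Zygmund decomposition --- is the intended one. However, two of the concrete intermediate statements you commit to are false, and they sit at the heart of your argument. First, your ``dual lemma'' ($u\in\underline{\mathcal S}(\lambda,\Lambda,f)$ in $B_{2\sqrt{2n}}(\bm{0})$, $\Vert f\Vert_{L^p}$ small, $\sup_{Q_1}u\ge M_0$ $\Rightarrow$ $|\{u>1\}\cap Q_1|\ge\mu_0$) fails as stated: take $u(\bm{z})=M_0\,\delta^{-1}\bigl(\mathrm{Re}\,z^1-\tfrac12+\delta\bigr)^+$, a maximum of two pluriharmonic functions, hence in $\underline{\mathcal S}(\lambda,\Lambda,0)$; its supremum over $Q_1$ equals $M_0$ (attained on a face of $Q_1$), yet $|\{u>1\}\cap Q_1|\le\delta$ is arbitrarily small. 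The correct statement must be localized at the point where $u$ is large and made scale-invariant, with the cube size dictated by Chebyshev against $\Vert u^+\Vert_{L^q}$ --- this localization is exactly what your formulation omits. Second, the claimed decay $|\{u>t\}\cap Q_1|\le Ct^{-c_1}$ for subsolutions under only smallness of $f$ is false ($u\equiv$ a large constant is a subsolution with $f=0$); the decay of Lemma~\ref{lemma-4-3} is a supersolution statement and requires the normalization $\inf_{Q_3}u\le 1$, and no such decay for subsolutions can come from the equation alone. Consequently your ``reversal'' step has nothing to reverse, and even if the decay were true it would bound norms of $u$ by constants, not $\sup u$ by $\Vert u^+\Vert_{L^q}$.

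The missing mechanism is the growing-points argument, structurally identical to the paper's proof of Lemma~\ref{lemma-4-6}: normalize $\Vert u^+\Vert_{L^q(B_{3/4}(\bm{0}))}\le 1$ and $\Vert f\Vert_{L^p}$ small; if $u(\bm{z_0})=t\ge M_0$ at some $\bm{z_0}\in B_{1/2}(\bm{0})$, choose $r\sim t^{-q/(2n)}$ so that Chebyshev gives $|\{u>t/2\}\cap Q_r(\bm{z_0})|\le\tfrac12|Q_r(\bm{z_0})|$; unless $u$ exceeds $\nu t$ ($\nu>1$ close to $1$) somewhere in a fixed dilate of $Q_r(\bm{z_0})$, the function $(\nu t-u)/((\nu-1)t)$ is a nonnegative supersolution there with value $1$ at $\bm{z_0}$, and the rescaled Lemma~\ref{lemma-4-3} forces $|\{u\le t/2\}\cap Q_r(\bm{z_0})|<\tfrac12|Q_r(\bm{z_0})|$, a contradiction. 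Hence there is a nearby point with $u>\nu t$, and iterating with radii $r_k\sim(\nu^k t)^{-q/(2n)}$, summable once $M_0$ is large, contradicts the continuity (local finiteness) of $u$; this yields the bound with exponent $q$ directly, and your closing reductions (H\"older for larger $q$, interpolation plus absorption over radii for smaller $q$) are fine. As written, though, the proposal's two central claims are incorrect, so the argument has a genuine gap rather than being a complete alternative proof.
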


Let $u \in \mathcal{S} (\lambda, \Lambda,f)$ be nonnegative in $B_1 (\bm{0})$. For any $0 < r < 1$, we define 
$\bm{z}: = r\bm{w}$  
and hence
$\tilde u (\bm{w}) := u (\bm{z}) = u (r \bm{w})$,  
for $\bm{w} \in B_1 (\bm{0})$ and $\bm{z} \in B_r (\bm{0})$. 
Then we can see that $\tilde u \in \mathcal{S} (\lambda,\Lambda, r^2 f(r\bm{w}))$ in $B_1 (\bm{0})$.
By Theorem~\ref{theorem-4-10}, 
\begin{equation*}
	\sup_{B_{\frac{r}{2}} (\bm{0})} u 
	\leq 
	C \left\{ \inf_{B_{\frac{r}{2}} (\bm{0})}  u + \left\Vert r^2 f (r\bm{w}) \right\Vert_{L^p \left(B_1(\bm{0})\right)} \right\} 
	=
	C \left\{ \inf_{B_{\frac{r}{2}} (\bm{0})}  u + r^{\frac{2 (p - n)}{p}} \Vert f\Vert_{L^p (B_r (\bm{0}))} \right\} 
	.
\end{equation*}
%
%
%
Applying Lemma~\ref{lemma-2-7} as previous, we can prove H\"older regularity.
\begin{theorem}
Let $f$ be a continuous and bounded function. 
Suppose that $p > n$ and $u \in  {\mathcal{S} } (\lambda,\Lambda,f)$ in  $B_1(\bm{0})$ is nonnegative. 
Then there are constants $\alpha \in (0,1)$ and $C > 0$ depending on $p$, $n$, $\lambda$ and $\Lambda$, it holds true that
\begin{equation}
\label{inequality-4-117}
	osc_{B_r (\bm{0})} u \leq C r^\alpha \left(osc_{B_1 (\bm{0})} u + \Vert f\Vert_{L^p (B_1 (\bm{0}))} \right) .
\end{equation}
\end{theorem}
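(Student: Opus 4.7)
The plan is to derive Hölder regularity from the scale-invariant Harnack inequality (Theorem~\ref{theorem-4-10}) by the same oscillation-decay scheme already used to obtain the modulus of continuity estimate~\eqref{inequality-4-82}, but with the sharper $L^p$ bound replacing $A(c_0,f,\Omega)$.

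First I would pass to scale $r$ by setting $\bm{z}=r\bm{w}$ and $\tilde u(\bm{w}):=u(r\bm{w})$ for $\bm{w}\in B_1(\bm{0})$. A direct computation shows $\tilde u\in\mathcal{S}(\lambda,\Lambda,r^2 f(r\bm{w}))$ in $B_1(\bm{0})$, and the $L^p$ norm scales as
\begin{equation*}
\bigl\Vert r^2 f(r\bm{w})\bigr\Vert_{L^p(B_1(\bm{0}))}=r^{\frac{2(p-n)}{p}}\Vert f\Vert_{L^p(B_r(\bm{0}))}.
\end{equation*}
Applying Theorem~\ref{theorem-4-10} to $\tilde u$ yields, for any $0<r\le 1$,
\begin{equation*}
\sup_{B_{r/2}(\bm{0})}u \le C\Bigl\{\inf_{B_{r/2}(\bm{0})}u+r^{\frac{2(p-n)}{p}}\Vert f\Vert_{L^p(B_r(\bm{0}))}\Bigr\}.
\end{equation*}

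Next, as in the derivation of \eqref{inequality-4-12-1}, I would apply this scaled Harnack inequality to the two nonnegative functions $\sup_{B_r(\bm{0})}u-u$ and $u-\inf_{B_r(\bm{0})}u$, which belong to $\mathcal{S}(\lambda,\Lambda,-f)$ and $\mathcal{S}(\lambda,\Lambda,f)$ respectively (Pucci operators just flip signs under negation). Adding the two resulting inequalities eliminates the cross terms and leaves
\begin{equation*}
(C+1)\,\operatorname{osc}_{B_{r/2}(\bm{0})}u\le(C-1)\operatorname{osc}_{B_r(\bm{0})}u+2C\,r^{\frac{2(p-n)}{p}}\Vert f\Vert_{L^p(B_1(\bm{0}))},
\end{equation*}
that is, $\omega(r/2)\le\gamma\,\omega(r)+\sigma(r)$ with $\omega(r):=\operatorname{osc}_{B_r(\bm{0})}u$, $\gamma:=(C-1)/(C+1)<1$, and $\sigma(r):=\frac{2C}{C+1}\,r^{\frac{2(p-n)}{p}}\Vert f\Vert_{L^p(B_1(\bm{0}))}$. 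Note that $\sigma$ is nondecreasing in $r$ because $r\mapsto r^{2(p-n)/p}\Vert f\Vert_{L^p(B_r(\bm{0}))}$ is monotone; this is exactly the hypothesis needed in the iteration lemma.

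Finally I would invoke Lemma~\ref{lemma-2-7} with $\tau=1/2$, $R=1$, and $\mu\in(0,1)$ to obtain
\begin{equation*}
\operatorname{osc}_{B_r(\bm{0})}u\le C\Bigl\{r^{\alpha_0}\operatorname{osc}_{B_1(\bm{0})}u+\sigma(r^\mu)\Bigr\}\le C\Bigl\{r^{\alpha_0}\operatorname{osc}_{B_1(\bm{0})}u+r^{\frac{2\mu(p-n)}{p}}\Vert f\Vert_{L^p(B_1(\bm{0}))}\Bigr\},
\end{equation*}
with $\alpha_0=(1-\mu)\ln\gamma/\ln\tau>0$. Setting $\alpha:=\min\bigl(\alpha_0,\frac{2\mu(p-n)}{p}\bigr)$ (and using $r\le 1$ to replace the larger exponent by $\alpha$) gives exactly~\eqref{inequality-4-117}. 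The only minor obstacle worth attention is the choice of $\mu$: one must ensure $\alpha>0$ while $p>n$ guarantees the second exponent is positive; any fixed $\mu\in(0,1)$ works, and optimizing $\mu$ only affects the constants, not the existence of $\alpha\in(0,1)$.
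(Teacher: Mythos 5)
Your proposal is correct and follows essentially the same route as the paper: rescale and apply Theorem~\ref{theorem-4-10} to get $\sup_{B_{r/2}}u \le C\{\inf_{B_{r/2}}u + r^{2(p-n)/p}\Vert f\Vert_{L^p(B_r(\bm{0}))}\}$, apply it to $\sup_{B_r}u-u$ and $u-\inf_{B_r}u$ to obtain the oscillation-decay inequality as in \eqref{inequality-4-12-1}, and conclude with Lemma~\ref{lemma-2-7}. The paper compresses this into ``applying Lemma~\ref{lemma-2-7} as previous,'' and your write-up supplies exactly those omitted details, with the scaling exponent and the choice of $\alpha$ handled correctly.
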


\begin{proof}[Proof of Theorem~\ref{theorem-1-3}]
Substituting \eqref{inequality-4-112-1} with $q = 2$ into \eqref{inequality-4-117} after appropriate rescaling, it suffices to adapt a standard rescaling and covering argument to prove Theorem~\ref{theorem-1-3}.

\end{proof}

\medskip

\noindent
{\bf Acknowledgements}\quad
The author wish to thank Chengjian Yao for helpful discussions and suggestions. 
The author is supported by a start-up grant from ShanghaiTech University (2018F0303-000-04).


\medskip

\end{document}